\documentclass[oneside]{amsart}
\usepackage{amsmath}
\usepackage{amssymb}
\usepackage{amsthm}
\usepackage{float}
\usepackage{ulem}
\usepackage{enumerate}
\usepackage{xcolor}
\usepackage[utf8x]{inputenc}
\usepackage{comment}
\usepackage{framed}
\usepackage{caption}
\usepackage{todonotes}

\newcommand{\eps}{\varepsilon}

\DeclareMathAlphabet{\mathbbb}{U}{bbold}{m}{n}

\newtheorem{thm}{Theorem}[section]
\newtheorem{cor}[thm]{Corollary}

\newtheorem{lem}[thm]{Lemma}

\newtheorem{conj}[thm]{Conjecture}
\theoremstyle{definition}

\theoremstyle{remark}

\numberwithin{equation}{section}
\usepackage{graphicx}

\newcommand{\R}{{\mathbb R}}

\newcommand{\matrice}{\begin{pmatrix}}
\newcommand{\ok}{\end{pmatrix}}

\newcommand{\BBS}{\mathbb{S}}

\definecolor{ottanio}{RGB}{2, 142, 185}

\begin{document}
\title[Shape optimisation for Maxwell's eigenvalues]{On a shape optimisation problem for Maxwell's eigenvalues on cuboids}
\thanks{The first and third authors acknowledge support of the INdAM GNAMPA group. The second author acknowledges the support of the INdAM GNSAGA group.}
\author[Lamberti]{Pier Domenico Lamberti}
\address{Dipartimento di Tecnica e Gestione dei Sistemi Industriali (DTG), Universit\`a degli Studi di Padova, Stradella S. Nicola 3 - 36100 Vicenza, Italy, e-mail: {\sf pierdomenico.lamberti@unipd.it}.}
\author[Provenzano]{Luigi Provenzano}
\address{Dipartimento di Scienze di Base e Applicate per l'Ingegneria, Sapienza Universit\`a di Roma, Via Scarpa 12 - 00161 Roma, Italy, e-mail: {\sf luigi.provenzano@uniroma1.it}.}
\author[Sempio]{Rebecca Sempio}
\address{Dipartimento di Matematica ``Tullio Levi-Civita'', Universit\`a degli Studi di Padova, Via Trieste 63 - 35121 Padova, Italy, e-mail: {\sf rebecca.sempio@math.unipd.it}.}

\begin{abstract}
We consider an optimisation problem for the elementary symmetric functions of the first three Maxwell's eigenvalues on cuboids under volume and perimeter constraint, and we show that the cube is a local  minimiser. More precisely, it is the unique minimiser in an explicit  cone of cuboids. The result gives a model case for the local optimisation of Maxwell's eigenvalues. On the other hand we show that such local extremality phenomena cannot be expected outside rigid geometric classes.
\end{abstract}

\keywords{Maxwell's eigenvalues, cuboid, symmetric functions of the eigenvalues, shape optimisation}
\subjclass{35P15, 35Q61, 49K20}

\thanks{}

\maketitle

\section{Introduction and statement of the main results}
Let $\Omega$ be a bounded domain in $\mathbb R^3$. We consider the eigenvalue problem for the {\rm curl\,curl} operator:
\begin{equation}\label{curlcurl}
\begin{cases}
{\rm curl\,curl\,}u=\lambda u\,, & {\rm in\ }\Omega\,,\\
{\rm div\,}u=0\,, & {\rm in\ }\Omega\,,\\
u\times\nu=0\,, & {\rm on\ }\partial\Omega\,,
\end{cases}
\end{equation}
in the unknowns $u:\Omega\to\mathbb R^3$ and $\lambda\in\mathbb R$. Problem \eqref{curlcurl} arises from the second-order reformulation of the time-harmonic Maxwell's system. We refer e.g., to \cite{cessenat,hanson,monk,nedelec} for a mathematical treatment of the theory of electromagnetism. Other relevant references on Problem \eqref{curlcurl} are the papers of Costabel and Dauge \cite{costabel1,costabel2,Costabel_2019}. We also refer to the more recent \cite{cocomo,ferraresso_provenzano,lastr,LambertiZaccaron2020,pauly,yin} and references therein. 

If $\Omega$ has a Lipschitz boundary, then Problem \eqref{curlcurl} admits a sequence of non-negative eigenvalues of finite multiplicity
$$
0\leq\lambda_1(\Omega)\leq\lambda_2(\Omega)\leq\cdots\leq\lambda_j(\Omega)\leq\cdots\nearrow+\infty,
$$
which we call Maxwell's  eigenvalues. If $\partial\Omega$ is connected, then $\lambda_1(\Omega)>0$. Under this assumption, it is natural to consider maximisation and minimisation problems for $\lambda_1(\Omega)$ under volume or perimeter constraint. However, it is well-known that
$$
\inf_{|\Omega|=1}\lambda_1(\Omega)=0,\quad\quad\quad\inf_{|\partial\Omega|=1}\lambda_1(\Omega)=0
$$
and
$$
\sup_{|\Omega|=1}\lambda_1(\Omega)=+\infty,\quad\quad\quad\sup_{|\partial\Omega|=1}\lambda_1(\Omega)=+\infty.
$$
The failure of the Faber-Krahn and reverse Faber-Krahn inequalities under either a volume or a perimeter constraint is proved in \cite{lamberti_zaccaron_FK}.
In the case of volume constraint, the result was already contained in \cite{guerini,savo_convex}, formulated in terms of eigenvalues of the Hodge Laplacian acting on differential forms.

\smallskip

This fact is not surprising. As  observed e.g., in \cite{ferraresso_provenzano,lamberti_zaccaron_FK,LambertiZaccaron2020,savo_convex},  the behaviour of Maxwell's eigenvalues can be very {\it wild} when it comes to shape optimisation, and simple geometries already highlight this phenomenon.  In \cite{ferraresso_provenzano} it is proved that  Maxwell's eigenvalues of a thin tubular neighborhood of an embedded hypersurface of $\mathbb R^3$ converge to the eigenvalues of the Laplacian on the surface (with Dirichlet conditions if it has a boundary). This allows for a great flexibility of the eigenvalues, which, under the sole volume or perimeter constraint, can be made arbitrarily large or small, regardless of topological or geometrical restrictions (number of boundary components, inradius, diameter, etc.).

\smallskip

Typically, for operators that are invariant under isometries, it is expected that the optimum for the first eigenvalue is the ball, or, at least, that it is a critical domain. Differently from the case of the Dirichlet Laplacian, for Problem \eqref{curlcurl}, the first eigenvalue of the ball is not simple: it has multiplicity $3$. When treating multiple eigenvalues, it has been highlighted in many contexts (see e.g., \cite{bula2,bula1,lala04,lalumu,LambertiZaccaron2020}) that the most natural point of view is that of considering the {\it elementary symmetric functions of the eigenvalues} rather than the eigenvalues themselves. For Problem \eqref{curlcurl}, it is shown in \cite{LambertiZaccaron2020} that the symmetric functions of multiple eigenvalues are real analytic, and that balls are {\it critical points} under a volume or perimeter constraint. A natural question is then: 

\smallskip

{\it ``Is the ball a local extremum for the symmetric functions of the first three eigenvalues?''}

\smallskip

In this paper we address a simplified version of this question. Namely, we restrict to cuboids, and study the local extremality of the cube for the symmetric functions of the first three eigenvalues under volume and perimeter constraints.

\smallskip

More precisely, set
\begin{align*}
&F_1(a,b,c)=a+b+c\nonumber\\
&F_2(a,b,c)=ab+ac+bc\\
&F_3(a,b,c)=abc\nonumber
\end{align*}
where $a,b,c\in\mathbb R$. The functions $F_i$, $i=1,2,3$ are the elementary symmetric functions in three  variables $(a,b,c)$.

\smallskip

Let $\mathbb R^3_+:=\{(x,y,z)\in\mathbb R^3:x>0,y>0,z>0\}$. A {\it cuboid} $\Omega(\ell)$, $\ell=(\ell_1,\ell_2,\ell_3)\in\mathbb R^3_+$, is a cartesian product domain of $\mathbb R^3$ defined (up to isometries) as
$$
\Omega(\ell)=(0,\ell_1)\times(0,\ell_2)\times(0,\ell_3).
$$
Hence, to a point  $\ell\in\mathbb R^3_+$ we associate the cuboid $\Omega(\ell)$. 

We define the following region in $\mathbb R^3_+$: 
$$
\mathcal P=\{(\ell_1,\ell_2,\ell_3)\in\mathbb R^3_+:\max\{\ell_1,\ell_2,\ell_3\}<2\min\{\ell_1,\ell_2,\ell_3\}\}.
$$
 Note that the open half-line of points of coordinates $(L,L,L)$, $L>0$, lies in the interior of $\mathcal P$. This line corresponds to cubes.

For $L>0$, we also define  the following two regions in $\R^3_+$:
  \begin{align*}
     R_{V,L}:&= \{(\ell_1,\ell_2,\ell_3) \in\R^3_+:\,\ell_1\ell_2\ell_3=L^3\}\,,\\
     R_{P,L}:&=\{(\ell_1,\ell_2,\ell_3)\in \R^3_+:\,\ell_1\ell_2+\ell_1\ell_3+\ell_2\ell_3=3L^2\}.
 \end{align*}
We write $R_{\star,L}$ to denote either $R_{V,L}$ or $R_{P,L}$.

 Our main result is stated as follows.

\begin{thm}\label{main2}
Let $\ell=(\ell_1,\ell_2,\ell_3)\in\mathbb R_+^3$ and let $\Omega(\ell)=(0,\ell_1)\times (0,\ell_2)\times(0,\ell_3)$. Let $L>0$ be fixed. Assume that
$$
\ell\in\mathcal P\cap R_{\star,L}.
$$
Let $\ell_L=(L,L,L)$. Then
$$
F_i(\lambda_1(\Omega(\ell)),\lambda_2(\Omega(\ell)),\lambda_3(\Omega(\ell)))\geq F_i(\lambda_1(\Omega(\ell_L)),\lambda_2(\Omega(\ell_L)),\lambda_3(\Omega(\ell_L)))
$$
for $i=1,2,3$, and the equality holds if and only if $\ell=\ell_L$.
\end{thm}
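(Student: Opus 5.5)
The plan is to compute the first three Maxwell eigenvalues of $\Omega(\ell)$ explicitly, and then reduce the inequalities to elementary AM--GM arguments.

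\emph{Step 1 (spectrum).} By separation of variables, the eigenvalues of \eqref{curlcurl} on a cuboid are
$$\pi^2\Big(\frac{k_1^2}{\ell_1^2}+\frac{k_2^2}{\ell_2^2}+\frac{k_3^2}{\ell_3^2}\Big),\qquad k_i\in\mathbb N\cup\{0\},$$
with at most one index equal to zero. Set $x_i=\ell_i^{-2}$. The candidates for the three smallest eigenvalues are the pairwise sums $\pi^2(x_i+x_j)$, $i<j$.

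\emph{Step 2 (the pairwise sums are $\lambda_1,\lambda_2,\lambda_3$ on $\mathcal P$).} Order $\ell_{\min}\le\ell_{\rm mid}\le\ell_{\max}$. The largest pairwise sum is $x_{\min}+x_{\rm mid}$, where $x_{\min}=\ell_{\min}^{-2}$ and $x_{\rm mid}=\ell_{\rm mid}^{-2}$. Every other admissible index triple must give a value at least this large.
\begin{itemize}
\item The triple $(1,1,1)$ gives $x_1+x_2+x_3$, which is trivially larger.
\item A triple containing an index $\ge 3$ is dominated by the corresponding triple with that index replaced by $2$. So it suffices to treat the triples of type $(0,1,2)$, i.e.\ values $x_j+4x_k$ with $j\neq k$.
\item The condition $\ell_{\max}<2\ell_{\min}$ gives $4x_k>x_{\min}$ for every $k$.
\item If $k$ is the largest side and $j$ the middle one, then $x_j+4x_k>x_{\rm mid}+x_{\min}$ follows directly from the previous bullet.
\item If $k$ is the middle side and $j$ the largest, I will use
$$3x_{\rm mid}\ge \tfrac34 x_{\min}> x_{\min}-x_{\max}.$$
\item The remaining placements are easier, since $x_j+4x_k$ only grows when $j$ or $k$ moves to a shorter side.
\end{itemize}
This case check is the step I expect to need the most care. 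It is where the hypothesis $\ell\in\mathcal P$ enters, so that no eigenvalue with an index equal to $2$ slips below the third pairwise sum. Hence on $\mathcal P$
$$(\lambda_1,\lambda_2,\lambda_3)=\pi^2\,(x_1+x_2,\;x_1+x_3,\;x_2+x_3)$$
up to ordering, and each $F_i$ is a symmetric polynomial in $x_1,x_2,x_3$.

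\emph{Step 3 (volume constraint).} With $p=x_1x_2x_3=(\ell_1\ell_2\ell_3)^{-2}$, AM--GM gives:
\begin{itemize}
\item $F_1=2\pi^2\sum_i x_i\ge 6\pi^2p^{1/3}$;
\item $F_3=\pi^6\prod_{i<j}(x_i+x_j)\ge 8\pi^6 p$, using $x_i+x_j\ge 2\sqrt{x_ix_j}$;
\item $F_2\ge 3F_3^{2/3}\ge 12\pi^4p^{2/3}$.
\end{itemize}
All three lower bounds are attained exactly when $x_1=x_2=x_3$, i.e.\ at the cube. On $R_{V,L}$ we have $p=L^{-6}$, so the bounds are precisely the values at $\ell_L$, and equality holds only if $\ell=\ell_L$.

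\emph{Step 4 (perimeter constraint).} The lower bounds of Step 3 are strictly decreasing functions of the volume $V=\ell_1\ell_2\ell_3$. On $R_{P,L}$, AM--GM applied to $\ell_1\ell_2,\ell_1\ell_3,\ell_2\ell_3$ gives $V^{2/3}\le L^2$, i.e.\ $V\le L^3$, with equality only for the cube. Therefore
$$F_i(\lambda(\Omega(\ell)))\ge \text{(bound at volume }V)\ge \text{(bound at volume }L^3)=F_i(\lambda(\Omega(\ell_L))).$$
Equality forces equality in both AM--GM steps, hence $\ell=\ell_L$.
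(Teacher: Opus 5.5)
Your proof is correct. It shares the paper's overall architecture: identify $\lambda_1,\lambda_2,\lambda_3$ with the pairwise sums $a(\ell),b(\ell),c(\ell)$ on $\mathcal P$, then minimise the symmetric functions of $a,b,c$ under the constraint. The difference is in the optimisation step.

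Your Step~2 is essentially the paper's Theorem~\ref{main1}. You prove only the direction you need, namely that on $\mathcal P$ the first three eigenvalues are the pairwise sums. Your case check reduces to $4x_{\max}>x_{\min}$, just as the paper's reduces to $1/x\le 4/z$.

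For the analogue of Theorem~\ref{main0}, the paper runs six separate Lagrange-multiplier analyses, one for each $F_i$ and each constraint. In each it shows that the cube is the only constrained critical point in a compact box $K_M$, and then proves coercivity estimates outside $K_M$ with ad hoc choices of $M$.

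You instead get explicit bounds directly from AM--GM:
\[
F_1\ge 6\pi^2V^{-2/3},\qquad F_3\ge 8\pi^6V^{-2},\qquad F_2\ge 3F_3^{2/3}\ge 12\pi^4V^{-4/3}.
\]
These hold for $F_i(a(\ell),b(\ell),c(\ell))$ at every $\ell\in\mathbb R^3_+$, with equality exactly at cubes. You then handle the perimeter constraint in one line, using monotonicity in $V$ together with $V\le L^3$ on $R_{P,L}$.

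Your route is much shorter and avoids compactness and critical-point bookkeeping. It makes the equality case immediate and yields quantitative volume-dependent lower bounds, showing that the perimeter statement is a corollary of the volume one. The paper's critical-point/coercivity scheme is a more generic template, but for these functionals it gives nothing extra.

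Two small points of care in Step~1, neither of which affects the argument. First, state that triples with all $k_i\ge 1$ are counted twice, and note that every such triple gives a value at least $x_1+x_2+x_3$, not just $(1,1,1)$. Second, note that triples of type $(0,2,2)$ are dominated by those of type $(0,1,2)$.
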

Theorem~\ref{main2} states that among all cuboids with dimensions $\ell=(\ell_1,\ell_2,\ell_3)\in\mathcal P$ and volume $L^3$ or perimeter $6L^2$, the cube of side $L$ uniquely minimises all the elementary symmetric functions of the first three eigenvalues. We will show that, as long as $\ell$ belongs to $\mathcal P$, $F_i(\lambda_1(\Omega(\ell)),\lambda_2(\Omega(\ell)),\lambda_3(\Omega(\ell)))$ is a smooth function of $\ell$ (see Theorem~\ref{main1}). Since the line $\ell_1=\ell_2=\ell_3$ (corresponding to cubes) lies in the interior of $\mathcal P$, the elementary symmetric functions of the first three eigenvalues, constrained to the submanifold $R_{\star,L}$, are smooth near the minimum point $(L,L,L)$. See Figures~\ref{sym_vol} and \ref{sym_per}.  
\begin{figure}
    \centering
    \includegraphics[width=\textwidth]{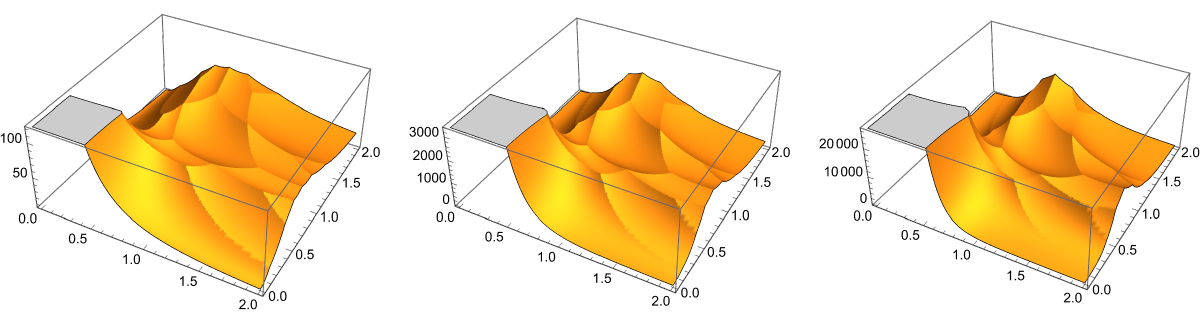}
    \caption{Plot of $F_i(\lambda_1(\Omega(\ell)),\lambda_2(\Omega(\ell)),\lambda_3(\Omega(\ell)))$, $i=1,2,3$, as functions of $(\ell_1,\ell_2)$. The volume constraint is encoded by setting $\ell=(\ell_1,\ell_2,\frac{1}{\ell_1\ell_2})$.}
    \label{sym_vol}
\end{figure}

\begin{figure}
    \centering
    \includegraphics[width=\textwidth]{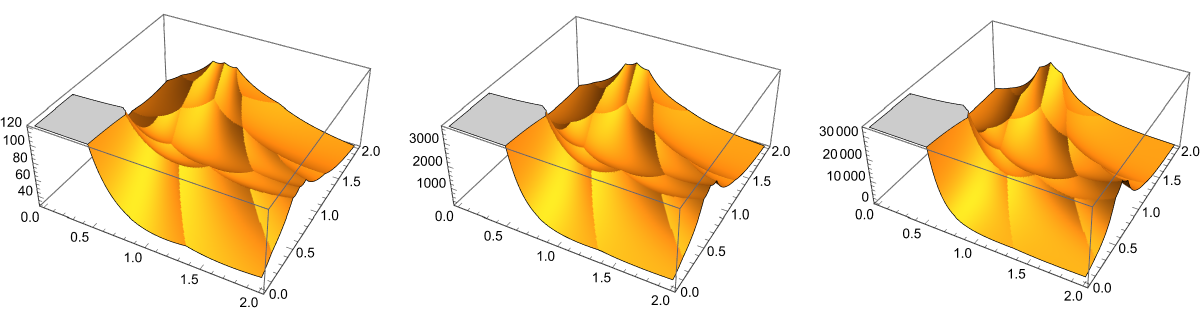}
    \caption{Plot of $F_i(\lambda_1(\Omega(\ell)),\lambda_2(\Omega(\ell)),\lambda_3(\Omega(\ell)))$, $i=1,2,3$, as functions of $(\ell_1,\ell_2)$. The perimeter constraint is encoded by setting $\ell=(\ell_1,\ell_2,\frac {3-\ell_1\ell_2}{\ell_1+\ell_2})$.}
    \label{sym_per}
\end{figure}

\smallskip

This model situation leads naturally to the following question:

\smallskip

{\it Is the ball a local minimiser under volume or perimeter constraint for the symmetric functions of the first three eigenvalues among all domains homeomorphic to a ball?}

\smallskip

Clearly one cannot drop the topological hypothesis. In fact, if a domain has $b+1$ connected components of the boundary, then the spectrum contains the eigenvalue $0$ with multiplicity $b$. In Section~\ref{app:B} we prove that the answer to the question above is negative. More precisely, we prove the following
\begin{thm}\label{prop0}
Let $B$ denote the unit ball in $\mathbb R^3$. For any $N\in\mathbb N$, $N\geq 1$ and any $\eps>0$ there exists a domain $\Omega_{N,\eps}$ homeomorphic to $B$ with either $|\Omega_{N,\eps}|=|B|$ or $|\partial\Omega_{N,\eps}|=|\partial B|$ such that
$$
\lambda_N(\Omega_{N,\eps})\leq\eps\,,\ \ \ d_H(B,\Omega_{N,\eps})\leq \eps,
$$
where $d_H$ is the Hausdorff distance.
\end{thm}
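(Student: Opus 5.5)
The construction will be a ball with many thin, deep "cracks" removed. Each crack will carry an eigenfunction of small eigenvalue, obtained from the thin-neighbourhood result of \cite{ferraresso_provenzano}. Two versions of the idea are available. In the first, we glue to the ball (through a small opening) a thin tubular neighbourhood $T_\delta$ of a surface $S$ with boundary. By \cite{ferraresso_provenzano}, the Maxwell eigenvalues of $T_\delta$ converge to the Dirichlet eigenvalues of $S$. If $S$ is large and the tube is thin, it has small volume but large first Dirichlet eigenvalue scaling is wrong, so this version fails. The cleaner version works \emph{inside} the ball. We remove from $B$ a finite union of thin closed slabs (thickened disks) $D_1,\dots,D_N$, each a $\delta$-neighbourhood of a flat disk of radius $r$, mutually disjoint and arranged within distance $\eps$ of the sphere or spread through $B$. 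The resulting domain has $N+1$ boundary components, hence $\lambda_1=\dots=\lambda_N=0$. We then reconnect each slab to the outer boundary by a thin cylindrical channel, which makes the boundary connected and the domain homeomorphic to $B$.

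The key steps, in order, are as follows.

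\emph{Step 1.} On $\Omega^0=B\setminus\bigcup_j \overline{D_j}$ there are $N$ linearly independent harmonic Neumann-type fields $h_j=\nabla\phi_j$. Here $\phi_j$ is harmonic, equal to $1$ on $\partial D_j$ and $0$ on the other boundary components. These fields are curl-free and divergence-free with $h_j\times\nu=0$, so they are eigenfunctions with eigenvalue $0$.

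\emph{Step 2.} We connect each $\partial D_j$ to $\partial B$ by a straight channel $C_{j,\eta}$ of radius $\eta$. On the new domain $\Omega_\eta$ we use the min--max characterisation of $\lambda_N$ on the space $\{u\in H_0({\rm curl})\,:\,{\rm div}\,u=0\}$, with test fields $u_j=\nabla(\chi_\eta\phi_j)$ corrected to be divergence-free. Here $\chi_\eta$ is a cutoff interpolating across the channel between $1$ and $0$. The essential estimate is a capacity bound: the cutoff needs only be modified in a neighbourhood of the channel whose $H^1$-capacity tends to $0$ as $\eta\to0$, because segments in $\mathbb R^3$ have zero capacity. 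This gives $\|{\rm curl}\,u_j\|$ small relative to $\|u_j\|$, which stays bounded below. One must also keep the tangential boundary condition; for this we use gradients of functions that are constant on each boundary piece, corrected by Leray projection. Then $\lambda_N(\Omega_\eta)\to0$. This is the main obstacle. We must show that the divergence-free projection of the quasi-gradient test fields does not destroy the smallness of the Rayleigh quotient. The plan is to write $u=\nabla\psi$ with $\psi=\phi_j$ outside a small ball around the channel and $\psi$ constant on $\partial\Omega_\eta$ near the junction. Such fields are automatically curl-free. Their divergence $\Delta\psi$ is supported near the channel with small $H^{-1}$ norm, and we remove it by subtracting $\nabla w$ with $w\in H^1_0$ solving $\Delta w=\Delta\psi$. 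The curl stays zero, and $\|\nabla w\|$ is small by capacity. The Rayleigh quotient then has numerator ${\rm curl}=0$ on an $N$-dimensional space. This would give eigenvalue $0$, contradicting connected boundary, so the space must fail to satisfy the constraint. Concretely, the correction must be done with $\psi$ not constant across the whole boundary, producing a small but nonzero curl. Bounding this curl via capacity is the delicate computation.

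\emph{Step 3.} We take $\delta,r$ small so that $|\bigcup D_j|$ and the channels have small volume and lie in an $\eps$-neighbourhood; the Hausdorff distance is then at most $\eps$. Finally we rescale by a factor close to $1$ to restore $|\Omega|=|B|$, or respectively $|\partial\Omega|=|\partial B|$. For the perimeter version, the slabs add perimeter about $2N\pi r^2$, so we choose $r$ small. The scale factor is then close to $1$, the eigenvalues scale by its inverse square, and the Hausdorff distance estimate is preserved up to $O(\eps)$.
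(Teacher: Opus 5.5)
\textbf{There is a genuine gap in Step 2.} Your overall plan is viable and differs from the paper's: cut $N$ thin slabs out of $B$ to get $N$ harmonic fields $\nabla\phi_j$ with zero tangential trace, short each slab to $\partial B$ through a thin removed channel, and use that a segment has zero capacity in $\mathbb R^3$. But Step 2, which you call the main obstacle, is not carried out, and the test fields you propose cannot work. Once the channels are removed, $\partial\Omega_\eta$ is connected. So a gradient $\nabla\psi$ satisfies $\nabla\psi\times\nu=0$ only if $\psi$ is constant on all of $\partial\Omega_\eta$, i.e.\ $\nabla\psi\in\nabla H^1_0(\Omega_\eta)$. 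Such fields are $L^2$-orthogonal to every divergence-free field. Your correction then returns $w=\psi-c$ and $\nabla\psi-\nabla w\equiv0$: the correction is not small, it is the whole field.

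Your concrete choice $\nabla(\chi_\eta\phi_j)$ is simply inadmissible. On $\partial D_j$ we have $\phi_j=1$ and $\nabla\phi_j\parallel\nu$, so its tangential trace is $\nabla\chi_\eta\times\nu\neq0$ around the mouth of the channel. Your remedy, a $\psi$ that is non-constant on the boundary ``producing a small but nonzero curl'', is self-contradictory: gradients are curl-free, and non-constancy on the boundary is exactly a violation of $u\times\nu=0$. So no admissible $N$-dimensional test space is ever produced.

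\textbf{The missing idea is to cut off the field, not the potential.} Take $u_j=\chi_\eta\nabla\phi_j$, where $\chi_\eta$ vanishes within distance $\eta$ of the channel axes, equals $1$ at distance $\geq\sqrt\eta$, and is logarithmic in between. Then $\|\nabla\chi_\eta\|_{L^2}^2\lesssim s/\log(1/\eta)$, and:
\begin{itemize}
\item $u_j\times\nu=0$ on all of $\partial\Omega_\eta$, since $u_j$ vanishes near the channels and $\nabla\phi_j$ is normal on $\partial B$ and on each $\partial D_k$.
\item ${\rm curl}\,u_j=\nabla\chi_\eta\times\nabla\phi_j$ and ${\rm div}\,u_j=\nabla\chi_\eta\cdot\nabla\phi_j$ are $O(\|\nabla\phi_j\|_{L^\infty(U)}\|\nabla\chi_\eta\|_{L^2})$. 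Here $U$ is a fixed neighbourhood of the channels, where $\nabla\phi_j$ is bounded if the channels meet $\partial D_j$ and $\partial B$ transversally at smooth points.
\item $u_j\to\nabla\phi_j$ in $L^2$, and the Gram matrix of the $\nabla\phi_j$ is nondegenerate.
\end{itemize}
You can remove the divergence in either of two ways. One is as in the paper: use the relative Hodge Laplacian, and note that $\lambda^H_j=\lambda_j$ for small values because $\lambda_1^D(\Omega_\eta)\geq\lambda_1^D(B)$. The other is to subtract $\nabla w_j$, where $w_j\in H^1_0(\Omega_\eta)$ solves $\Delta w_j={\rm div}\,u_j$ and satisfies $\|\nabla w_j\|\leq\lambda_1^D(B)^{-1/2}\|{\rm div}\,u_j\|$; this keeps both the curl and the boundary condition. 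Either way $\lambda_N(\Omega_\eta)=O(1/\log(1/\eta))$.

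With this repair your route is a legitimate alternative to the paper's. It stays inside $B$, so the Hausdorff and volume bookkeeping is immediate, and it avoids the thin-shell theorem of \cite{ferraresso_provenzano}. Finally, the version you discarded is exactly the paper's proof. The surface there is $\BBS^2_\eps\setminus\mathcal D_\delta$, whose first Dirichlet eigenvalue tends to $0$ as $\delta\to0$ because points have zero capacity on a surface. No large surface is needed.
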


One can replace $B$ in Theorem~\ref{prop0} by any other smooth domain $\Omega$, see Section~\ref{app:B}. This means that there are no local minimisers for the first eigenvalue, nor for the symmetric functions of any multiple eigenvalue, under volume or perimeter constraint. This example motivates our restriction to cuboids: outside a very rigid class of geometries, everything can happen at the level of shape optimisation, and local minimality of the ball fails. Hence, the restriction to very special geometric classes is not merely a technical simplification. Cuboids represent the most basic {\it convex} model in shape optimisation problems. Theorem~\ref{main2} and Theorem~\ref{prop0} motivate the following:

\begin{conj}\label{conj2} The ball is a local minimiser under volume or perimeter constraint for the symmetric functions of the first three eigenvalues among  {\rm convex} domains.
\end{conj}

We remark that even within the class of cuboids, global extremisers under volume constraint do not exist: all Maxwell's eigenvalues of $(0,\eps)\times(0,\eps)\times(0,\eps^{-2})$ go to $+\infty$ as $\eps\to 0$, while all Maxwell's eigenvalues of $(0,\eps^2)\times(0,\eps^{-1})\times(0,\eps^{-1})$ go to $0$ as $\eps\to 0$. In general it is known that `cigar-like' convex domains have large Maxwell's eigenvalues, while `flying saucer-like' convex domains have small Maxwell's eigenvalues, see \cite{savo_convex}. In the case of perimeter constraint, all Maxwell's eigenvalues can be made arbitrarily large as well by taking thin and long cuboids, while there is a uniform lower bound for the eigenvalues of convex domains in terms of the perimeter, which is a consequence of \cite{savo_convex}. However, it is very likely that among convex domains there are no global minimisers for the eigenvalues or their symmetric functions, and that minimising sequences degenerate in the limit (see Section~\ref{app:A} for more details in the case of cuboids). 

What is known for convex sets are {\it upper and lower bounds} for the first eigenvalue, see \cite{savo_convex}: it can be estimated from above and below in terms of  $\frac{1}{a_2^2}$, where $a_2>0$ is the intermediate semiaxis of the John ellipsoid of the set. This is the best meaningful geometric bound currently available.

Our study of cuboids is exactly in this spirit: we consider a specific geometric setting in which certain spectral optimisation questions are still meaningful and can be analysed explicitly.

\subsection{The first three eigenvalues of cuboids} The fact that the elementary symmetric functions of the first three eigenvalues of cuboids are smooth near the cube and that the cube is a local minimum under volume and perimeter constraint should be compared with the behaviour of each single eigenvalue $\lambda_i(\Omega(\ell))$, $i=1,2,3$.  In Section~\ref{app:A} we prove the following result:

\begin{thm}
\label{AutovSingolarmente}

 The functions $\lambda_k(\Omega(\cdot)):R_{\star,L}\to\mathbb R$, $k=1,2,3$  
    are Lipschitz continuous in a neighborhood of $\ell_L=(L,L,L)$, and are not differentiable at $\ell_L$. Moreover:
    \begin{enumerate}[a)]
    \item there exists a neighborhood $\mathcal U$ of $\ell_L$ in $R_{\star,L}$ such that 
    \[
    \lambda_1(\Omega(\ell_L))\ge \lambda_1(\Omega(\ell)) \quad \text{and} \quad \lambda_3(\Omega(\ell_L))\le \lambda_3(\Omega(\ell)) 
    \]
    for every $\ell \in \mathcal U$. Equality holds for $\ell\in\mathcal U$ if and only if $\ell=\ell_L$;
    \item there exist $\ell',\ell'' \in \mathcal U$ such that
    \[
    \lambda_2(\Omega(\ell'))< \lambda_2(\Omega(\ell_L)) < \lambda_2(\Omega(\ell'')).
    \]
    \end{enumerate}
\end{thm}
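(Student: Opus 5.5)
The plan is to use the explicit spectrum of cuboids and reduce everything to elementary first-order expansions near $\ell_L$. Maxwell's eigenvalues of $\Omega(\ell)$ with the boundary condition $u\times\nu=0$ are
$$
\pi^2\Bigl(\frac{m^2}{\ell_1^2}+\frac{n^2}{\ell_2^2}+\frac{p^2}{\ell_3^2}\Bigr),\qquad m,n,p\in\mathbb N\cup\{0\},
$$
with at least two of $m,n,p$ nonzero, and with multiplicity $2$ when all three are nonzero. We set $x_i=\ell_i^{-2}$. The candidates for the three lowest eigenvalues are the pair sums $\pi^2(x_i+x_j)$, $i<j$.

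\textbf{Step 1 (identifying the first three eigenvalues).} The next candidates are $\pi^2(x_i+4x_j)$ and $\pi^2(x_1+x_2+x_3)$. We have $x_i+4x_j>x_i+x_k$ exactly when $2\ell_k>\ell_j$, and $x_1+x_2+x_3>x_i+x_j$ always. Hence on $\mathcal P$, in particular in a neighbourhood of $\ell_L$, the first three eigenvalues are the ordered pair sums. Writing $S=x_1+x_2+x_3$, this gives
$$
\lambda_1=\pi^2\bigl(S-\max_i x_i\bigr),\qquad \lambda_2=\pi^2\bigl(S-\mathrm{mid}_i\, x_i\bigr),\qquad \lambda_3=\pi^2\bigl(S-\min_i x_i\bigr),
$$
where $\mathrm{mid}$ denotes the median. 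Since $\max$, $\min$ and the median of finitely many smooth functions are Lipschitz, Lipschitz continuity near $\ell_L$ follows immediately.

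\textbf{Step 2 (local coordinates on the constraint surface).} Write $x_i=L^{-2}(1+t_i)$, equivalently $\ell_i=L(1+t_i)^{-1/2}$. Under the volume constraint, $\sum_i\log(1+t_i)=0$. Under the perimeter constraint, $\sum_{i<j}\bigl((1+t_i)(1+t_j)\bigr)^{-1/2}=3$. In both cases the differential at $t=0$ is a nonzero multiple of $\sum_i t_i$. So $R_{\star,L}$ is, near $\ell_L$, a smooth surface tangent to the plane $\sum_i t_i=0$, and on it
$$
\sum_i t_i=O(|t|^2).
$$
Consequently, with $\lambda_0=2\pi^2L^{-2}$,
$$
\lambda_1=\lambda_0-\pi^2L^{-2}\max_i t_i+O(|t|^2),
$$
$$
\lambda_2=\lambda_0-\pi^2L^{-2}\,\mathrm{mid}_i\, t_i+O(|t|^2),
$$
$$
\lambda_3=\lambda_0-\pi^2L^{-2}\min_i t_i+O(|t|^2).
$$

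\textbf{Step 3 (conclusions).} If $\sum_i t_i=O(|t|^2)$, then $\max_i t_i\ge c|t|-O(|t|^2)$ and $\min_i t_i\le -c|t|+O(|t|^2)$ for some $c>0$. Indeed, if all $t_i\le\delta$, the negative entries must sum to at least $-2\delta-O(|t|^2)$ in absolute value, which bounds $|t|$ by $C\delta+O(|t|^2)$. Hence, on a small neighbourhood $\mathcal U$, we get $\lambda_1(\ell)<\lambda_1(\ell_L)$ and $\lambda_3(\ell)>\lambda_3(\ell_L)$ for $\ell\ne\ell_L$, which proves a).

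For b), we move along curves on $R_{\star,L}$ with tangent directions $(-2,1,1)$ and $(2,-1,-1)$. For small $s>0$, along the first curve the median of $t$ is $s+O(s^2)$, so $\lambda_2<\lambda_2(\ell_L)$. Along the second the median is $-s+O(s^2)$, so $\lambda_2>\lambda_2(\ell_L)$.

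Non-differentiability comes from the same expansions. If $\lambda_k$ were differentiable at $\ell_L$, its restriction to the tangent plane would be linear to first order. But $t\mapsto\max_i t_i$, $t\mapsto\min_i t_i$ and $t\mapsto\mathrm{mid}_i\,t_i$ are not linear on $\{\sum_i t_i=0\}$: each takes values of opposite sign and unequal size at $\pm v$ for suitable $v$. Concretely, for $v=(-2,1,1)$ we have $\max(v)=1$ but $\max(-v)=2\ne-1$, and similar choices work for the median and the minimum.

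\textbf{Expected main obstacle.} The only delicate point is Step 2: checking rigorously that both constraints give $\sum_i t_i=O(|t|^2)$, and that the quadratic remainders are uniform, so that they do not spoil the strict inequalities. This follows from the implicit function theorem applied to the constraint, solving for $t_3$ as a smooth function of $(t_1,t_2)$ with gradient $(-1,-1)$ at the origin.
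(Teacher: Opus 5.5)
The one step that fails is the non-differentiability of $\lambda_2$. Your test compares the first-order coefficients at $v$ and $-v$. But the median is odd: negating a triple reverses its order, so $\mathrm{mid}(-v)=-\mathrm{mid}(v)$ for every $v$. Hence no ``similar choice'' of $v$ exists for $k=2$.

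Your own expansion shows more. Let $\gamma$ be any smooth curve in $R_{\star,L}$ through $\ell_L$ with tangent $v$ in the $t$-coordinates. Since $\mathrm{mid}(sv)=s\,\mathrm{mid}(v)$ for all real $s$ and the median is $1$-Lipschitz, you get $\lambda_2(\gamma(s))=\lambda_0-\pi^2L^{-2}s\,\mathrm{mid}(v)+O(s^2)$ for $s$ of both signs. So $\lambda_2$ is differentiable along every curve through $\ell_L$, and no two-sided comparison along a single direction can detect its kink.

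For $\max$ and $\min$ your concrete check $\max(-v)\neq-\max(v)$ is the right one. Note, though, that there the values at $\pm v$ have the same sign, not opposite signs as your sentence says.

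What detects the kink of $\lambda_2$ is failure of additivity. Take the tangent vectors $v_1=(-2,1,1)$, $v_2=(1,-2,1)$, $v_3=(1,1,-2)$, which lie in $\{\sum_i t_i=0\}$ and sum to $0$. Along curves with these tangents your expansion gives derivative $-\pi^2L^{-2}\mathrm{mid}(v_i)=-\pi^2L^{-2}$ for each $i$. If $\lambda_2$ were differentiable at $\ell_L$, these numbers would equal $d\lambda_2(v_i)$. Their sum would then be $d\lambda_2(v_1+v_2+v_3)=0$, not $-3\pi^2L^{-2}$. With this replacement your argument is complete.

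The rest (Step 1, the Lipschitz claim, a) and b)) is correct and takes a genuinely different route from the paper. The paper parametrises each constraint surface by $(\ell_1,\ell_2)$ and splits the plane into regions where $\lambda_k$ is a single smooth function. It then uses monotonicity along coordinate lines to push extrema onto the region boundaries and studies one-variable restrictions to those boundary curves, treating the perimeter case as analogous. Your expansion, with its piecewise-linear leading term, handles both constraints at once, since only their common tangent plane matters. It yields the strict inequalities in a) directly from $\max_i t_i\ge c|t|-O(|t|^2)$. Once the median case is fixed as above, it also gives an actual proof of non-differentiability, which the paper states without a separate argument.
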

Summarising, Theorem~\ref{AutovSingolarmente} states that, among cuboids of fixed volume or perimeter:
    \begin{itemize}
    \item the cube is a local maximiser for $\lambda_1(\Omega(\ell))$;
    \item  the cube is not a local maximiser nor a local minimiser for $\lambda_2(\Omega(\ell))$;
    \item the cube is a local minimiser for $\lambda_3(\Omega(\ell))$;
    \item each single eigenvalue is not smooth at the cube, but their combination in some elementary symmetric function is smooth, and for such combinations, the cube is a minimiser. 
    \end{itemize}

At this point one is led to formulate the following naive question:

\smallskip

{\it Is the ball a local maximiser under volume or perimeter constraint for the first eigenvalue among all domains homeomorphic to a ball?}
\smallskip

In Section~\ref{app:B} we discuss why this question should have a negative answer. In particular, we discuss how, for any $\eps>0$, one can construct a domain $\Omega_\eps$ diffeomorphic to $B$  with either $|\Omega_\eps|=|B|$ or $|\partial\Omega_\eps|=|\partial B|$ such that $\lambda_1(\Omega_\eps)>\lambda_1(B)\,,\ \ \ d_H(B,\Omega_\eps)\leq \eps$.  Such an example would show that there are no local maximisers for the first Maxwell's eigenvalue under volume or perimeter constraint. However, we don't give here a rigorous proof of such construction since it would fall outside the scopes of the present paper. At any rate, it is natural to state the more meaningful:

\begin{conj}\label{conj4}The ball is a local maximiser under volume or perimeter constraint for the first eigenvalue among  {\rm convex} domains.
\end{conj}

\subsection{Organisation of the paper}
The paper is organised as follows. In Section~\ref{sec2} we present the proof of Theorem~\ref{main2}, which is based on two results, Theorems~\ref{main0} and \ref{main1}, which are proved in Sections~\ref{sec:main0} and \ref{sec:main1}, respectively. In Section~\ref{app:A} we discuss the behaviour of the first three eigenvalues on cuboids under volume and perimeter constraints, showing that the cube is a local maximiser for the first eigenvalue and a local minimiser for the third eigenvalue. In Section~\ref{app:B} we prove that the ball is not a local minimiser for the first eigenvalue, and for the symmetric functions of multiple eigenvalues, showing that local perturbation can produce an arbitrary number of small eigenvalues. We also discuss the case of the local maximization.

\section{Proof of Theorem~\ref{main2}}\label{sec2}

In order to prove Theorem~\ref{main2}, we first study a constrained optimisation problem for the symmetric functions $F_1,F_2,F_3$ of three numbers depending on $\ell=(\ell_1,\ell_2,\ell_3)\in\mathbb R^3_+$, which are eigenvalues of \eqref{curlcurl} on cuboids, but not necessarily the first three. Precisely, set
\begin{align}\label{symmetric_F}
&a(\ell):=\pi^2\left(\frac{1}{\ell_2^2}+\frac{1}{\ell_3^2}\right)\nonumber\\
&b(\ell):=\pi^2\left(\frac{1}{\ell_1^2}+\frac{1}{\ell_3^2}\right)\\
&c(\ell):=\pi^2\left(\frac{1}{\ell_1^2}+\frac{1}{\ell_2^2}\right).\nonumber
\end{align}
In Section~\ref{sec:main0} we prove the following.

\begin{thm}\label{main0}
Let $\ell=(\ell_1,\ell_2,\ell_3)\in\mathbb R_+^3$. Assume that $\ell$ satisfies one of the following two conditions:
\begin{enumerate}[a)]
\item $\ell_1\ell_2\ell_3=1$.
\item$\ell_1\ell_2+\ell_1\ell_3+\ell_2\ell_3=3$.
\end{enumerate}
Let $\ell_0=(1,1,1)$. Then
\begin{equation}\label{ineq_area}
F_i(a(\ell),b(\ell),c(\ell))\geq F_i(a(\ell_0),b(\ell_0),c(\ell_0))
\end{equation}
for $i=1,2,3$, and the equality holds if and only if $\ell=\ell_0$.
\end{thm}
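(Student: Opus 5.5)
Substitute $x_j=\ell_j^{-2}>0$. Then $a=\pi^2(x_2+x_3)$, $b=\pi^2(x_1+x_3)$, $c=\pi^2(x_1+x_2)$, and at $\ell_0$ we have $a=b=c=2\pi^2$. Writing $s_1=x_1+x_2+x_3$, $s_2=x_1x_2+x_1x_3+x_2x_3$, $s_3=x_1x_2x_3$, the symmetric functions of $(a,b,c)$ are polynomials in the $s_k$ with nonnegative coefficients:
\begin{align*}
F_1(a,b,c)&=2\pi^2 s_1,\\
F_2(a,b,c)&=\pi^4\left(s_1^2+s_2\right),\\
F_3(a,b,c)&=\pi^6\left(s_1s_2-s_3\right).
\end{align*}
The first two identities follow from $\sum_{i<j}(s_1-x_i)(s_1-x_j)=3s_1^2-2s_1^2+s_2$. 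The third is the classical identity $(x_1+x_2)(x_1+x_3)(x_2+x_3)=s_1s_2-s_3$. The plan is therefore to bound $s_1$, $s_2$ and $s_1s_2-s_3$ from below under each constraint, with equality only at $x=(1,1,1)$. Recall that at the cube $s_1=3$, $s_2=3$ and $s_1s_2-s_3=8$.

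\emph{Volume constraint.} Here $s_3=x_1x_2x_3=1$. AM--GM gives $s_1\ge 3$ and $s_2\ge 3(s_3)^{2/3}=3$, each with equality iff all $x_j=1$, which settles $i=1,2$. For $i=3$, AM--GM on each factor gives $(x_1+x_2)(x_1+x_3)(x_2+x_3)\ge 8\sqrt{x_1^2x_2^2x_3^2}=8$, again with equality iff all $x_j$ are equal.

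\emph{Perimeter constraint.} Here $\ell_1\ell_2+\ell_1\ell_3+\ell_2\ell_3=3$. First, AM--GM gives $3\ge 3(\ell_1\ell_2\ell_3)^{2/3}$, so $\ell_1\ell_2\ell_3\le 1$, hence $s_3\ge1$. The reductions above then give $s_1\ge 3$ and $(x_1+x_2)(x_1+x_3)(x_2+x_3)\ge 8$ directly. They also give $s_2\ge 3 s_3^{2/3}\ge 3$, and equality in any of these forces $\ell_1=\ell_2=\ell_3$, and then $\ell=\ell_0$ by the constraint. For $s_1$ one can alternatively use the power-mean inequality: $\sum \ell_j^{-2}\ge \frac13(\sum \ell_j^{-1})^2$ and $\sum\ell_j^{-1}=\frac{3}{\ell_1\ell_2\ell_3}\ge 3$.

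\emph{Main obstacle.} The only real work is spotting that $F_3$ equals $\pi^6(x_1+x_2)(x_1+x_3)(x_2+x_3)$ and that the perimeter constraint yields $\ell_1\ell_2\ell_3\le1$. Once these are in hand, every inequality is monotone in $s_3$ or follows from AM--GM in the $x_j$. I would double-check that the equality cases in the perimeter case really force $\ell=\ell_0$ and not merely some other cube. This holds because equality in the chain $s_3\ge1$ already forces the volume to be $1$ at a cube, which together with the constraint gives $\ell_0$.
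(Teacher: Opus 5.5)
Your proof is correct, and it takes a genuinely different and much shorter route than the paper's.

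\textbf{The paper's route.} It treats the six cases (three functions, two constraints) separately. In each case it restricts to a compact piece $K_M$ of the constraint surface and uses Lagrange multipliers to show that $\ell_0$ is the only constrained critical point in ${\rm Int}\,K_M$. It then checks, by ad hoc estimates, that $F_i$ exceeds its value at $\ell_0$ outside ${\rm Int}\,K_M$. Hence the minimum is interior and must occur at $\ell_0$.

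\textbf{Your route.} You substitute $x_j=\ell_j^{-2}$ and write $F_1=2\pi^2 s_1$, $F_2=\pi^4(s_1^2+s_2)$ and $F_3=\pi^6(x_1+x_2)(x_1+x_3)(x_2+x_3)$. AM--GM then gives $s_1\ge 3s_3^{1/3}$, $s_2\ge 3s_3^{2/3}$ and $(x_1+x_2)(x_1+x_3)(x_2+x_3)\ge 8s_3$, with equality only when $x_1=x_2=x_3$. Since $s_3=(\ell_1\ell_2\ell_3)^{-2}$, you have in effect proved the scale-invariant inequality
\[
F_i(a(\ell),b(\ell),c(\ell))\ge (\ell_1\ell_2\ell_3)^{-2i/3}\,F_i(a(\ell_0),b(\ell_0),c(\ell_0))\qquad\text{for every }\ell\in\mathbb R^3_+ .
\]
The volume case is then immediate. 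The perimeter case follows from the isoperimetric inequality for boxes, $\ell_1\ell_2\ell_3\le 1$, which is exactly your AM--GM step.

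\textbf{Comparison.} Your approach gives explicit equality cases with no compactness argument and no choice of $M$. It also explains conceptually why the perimeter statement is a consequence of the volume statement. The paper's Lagrange-multiplier scheme is heavier but more generic. Its critical-point computation never actually uses $\ell\in K_M$, so it also shows that $\ell_0$ is the only constrained critical point of each $F_i$; your argument neither gives nor needs this.

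One harmless slip: $F_3=\pi^6(s_1s_2-s_3)$ is not a polynomial with nonnegative coefficients in the $s_k$, contrary to your opening sentence. This does not affect the proof, since you bound the factored form of $F_3$ directly.
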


Theorem~\ref{main0} establishes that the elementary symmetric functions of the three numbers $a(\ell),b(\ell),c(\ell)$ have a unique global minimum under either the constraint a) or b), and it is achieved if and only if $\ell=\ell_0=(1,1,1)$.

\smallskip

Let $\ell\in\mathbb R^3_+$ and let $\Omega(\ell)$ be the corresponding cuboid. The volume of $\Omega(\ell)$ is  given by
$$
|\Omega(\ell)|=\ell_1\ell_2\ell_3
$$
and its perimeter (the total surface area) is given by
$$
2(\ell_1\ell_2+\ell_1\ell_3+\ell_2\ell_3).
$$
When $\ell=\ell_0=(1,1,1)$ we have the unit cube, which has volume $1$ and perimeter $6$.

\smallskip

Next, we relate the numbers $a(\ell),b(\ell),c(\ell)$ with the eigenvalues of \eqref{curlcurl} on $\Omega(\ell)$.

The spectrum of a cuboid is well-known, see \cite{Costabel_2019}. The eigenvalues are given by the following families
\begin{enumerate}[i)]
    \item $\pi^2\left(\frac{k_1^2}{\ell_1^2}+\frac{k_2^2}{\ell_2^2}+\frac{k_3^2}{\ell_3^2}\right)$  counted once, with $k_i\in\mathbb N$, and exactly one $k_i=0$;
    \item $\pi^2\left(\frac{k_1^2}{\ell_1^2}+\frac{k_2^2}{\ell_2^2}+\frac{k_3^2}{\ell_3^2}\right)$ counted twice, with $k_i\in\mathbb N$, $k_i\geq 1$.
\end{enumerate}

Here by $\mathbb N$ we denote the set of natural numbers, including $0$. If $\ell_0=(1,1,1)$, then we see that
$$
a(\ell_0)=b(\ell_0)=c(\ell_0)=\lambda_1(\Omega(\ell_0))=\lambda_2(\Omega(\ell_0))=\lambda_3(\Omega(\ell_0)).
$$
In other words, $a(\ell_0),b(\ell_0),c(\ell_0)$ are exactly the first three eigenvalues of \eqref{curlcurl} for the unit cube $\Omega(\ell_0)$.  
 
 The next theorem, which we prove in Section~\ref{sec:main1}, characterizes the cuboids $\Omega(\ell)$ for which the numbers $a(\ell),b(\ell),c(\ell)$ are  the first three eigenvalues: they correspond to $\ell\in\overline{\mathcal P}$.

\begin{thm}\label{main1}
We have
$$
\{\lambda_1(\Omega(\ell)),\lambda_2(\Omega(\ell)),\lambda_3(\Omega(\ell))\}=\{a(\ell),b(\ell),c(\ell)\}
$$
as sets of numbers if and only if $\ell=(\ell_1,\ell_2,\ell_3)\in\overline{\mathcal P}$. If moreover $0<\ell_1\leq\ell_2\leq\ell_3$, then $\lambda_1(\Omega(\ell))=a(\ell)$, $\lambda_2(\Omega(\ell))=b(\ell)$, $\lambda_3(\Omega(\ell))=c(\ell)$.
\end{thm}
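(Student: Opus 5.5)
By symmetry of both the statement and the set $\overline{\mathcal P}$ under permutations of the coordinates, we may assume $0<\ell_1\le\ell_2\le\ell_3$. Then $\overline{\mathcal P}$ is the condition $\ell_3\le 2\ell_1$. In this ordering we have $1/\ell_1^2\ge 1/\ell_2^2\ge 1/\ell_3^2$, so
$$
a(\ell)\le b(\ell)\le c(\ell).
$$
The plan is to list the smallest candidates from families i) and ii) and compare them with $a,b,c$.

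\textbf{Family i).} Exactly one index vanishes. For each choice of the vanishing index, the value is minimised by setting both other indices to $1$, which gives $a$, $b$ or $c$. Every other member of family i) has some nonzero $k_i\ge 2$. The smallest such values are
$$
\pi^2\Big(\tfrac{4}{\ell_3^2}+\tfrac{1}{\ell_2^2}\Big)\ \text{(vanishing index $1$)},\qquad \pi^2\Big(\tfrac{1}{\ell_3^2}+\tfrac{4}{\ell_2^2}\Big),
$$
together with the analogous ones with vanishing index $2$ or $3$.

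\textbf{Family ii).} The smallest member is
$$
\pi^2\Big(\tfrac1{\ell_1^2}+\tfrac1{\ell_2^2}+\tfrac1{\ell_3^2}\Big),
$$
counted twice. It is strictly larger than $c$.

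\textbf{Candidate competitor.} The spectrum counted with multiplicity therefore begins with $a\le b\le c$ among the family i) values with indices in $\{0,1\}$. The only other values that could fall below $c$ come from family i) with an index at least $2$. The smallest of these is
$$
d(\ell):=\pi^2\Big(\tfrac{4}{\ell_3^2}+\tfrac{1}{\ell_2^2}\Big),
$$
obtained with $k_1=0$, $k_2=1$, $k_3=2$. Indeed, placing the $2$ on the longest side $\ell_3$ and the $0$ on the shortest side $\ell_1$ minimises among such terms. To justify this, I will compare $d$ with every other term in which some $k_i\ge2$ by a termwise check using the ordering of the $\ell_i$.

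\textbf{Sufficiency ($\ell\in\overline{\mathcal P}$).} We have
$$
c\le d \iff \tfrac1{\ell_1^2}\le \tfrac4{\ell_3^2} \iff \ell_3\le 2\ell_1,
$$
which is exactly $\overline{\mathcal P}$. Hence on $\overline{\mathcal P}$ the three smallest eigenvalues, counted with multiplicity, are $a\le b\le c$. In the equality case $c=d$ the multiset is ambiguous, but the set $\{\lambda_1,\lambda_2,\lambda_3\}$ still equals $\{a,b,c\}$, since the third eigenvalue has value $c=d$. The identification $\lambda_1=a$, $\lambda_2=b$, $\lambda_3=c$ then follows directly.

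\textbf{Necessity ($\ell\notin\overline{\mathcal P}$).} Here $\ell_3>2\ell_1$, so $d<c$. Then $\lambda_1,\lambda_2,\lambda_3$ are the three smallest values among $a,b,d,\dots$ counted with multiplicity, and all of them are strictly smaller than $c$. So the value $c$ does not occur among the first three eigenvalues unless it coincides with one of them. Since $\lambda_3\le d<c$, the value $c$ is not in $\{\lambda_1,\lambda_2,\lambda_3\}$ and the set equality fails.

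\textbf{Main obstacle.} The delicate point is the bookkeeping around coincidences. We need that no other eigenvalue lies strictly between $b$ and $c$ without being $d$-like; otherwise, when $\ell\notin\overline{\mathcal P}$, one still has $\lambda_3\le d<c$. I would handle this by the termwise comparison above, checking that every family i) term other than $a$, $b$ and $c$ is at least $d$.
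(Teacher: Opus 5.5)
Your approach is the paper's. Both reduce to $\ell_1\le\ell_2\le\ell_3$ (the paper writes $x=\ell_1^2\le y=\ell_2^2\le z=\ell_3^2$). Both compare $\max\{a,b,c\}=c$ with the smallest family i) value carrying a coefficient $4$, which is your $d=\pi^2(1/\ell_2^2+4/\ell_3^2)$. Both obtain $z\le 4x$, i.e.\ $\ell_3\le 2\ell_1$, and then permute coordinates. Your sufficiency argument, including the tie $c=d$, is correct and more explicit than the paper's.

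The necessity step, however, rests on a false inequality: $\lambda_3\le d$ does not hold in general when $\ell_3>2\ell_1$. Take $\ell=(\varepsilon,1,2)$ with $\varepsilon$ small. Then $a=\tfrac54\pi^2$ and $d=2\pi^2$. Meanwhile $b$, $c$ and every value with $k_1\ge 1$ are at least $\pi^2/\varepsilon^2$. Hence $\lambda_3=\pi^2(1+\tfrac94)=\tfrac{13}{4}\pi^2>d$. The issue is that $b$ can exceed $d$, so $a,b,d$ need not be the three smallest values.

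What you actually need is only $\lambda_3<c$. This follows once you exhibit three distinct spectral entries strictly below $c$:
\begin{itemize}
\item If $\ell_2<\ell_3$, these are $a,b,d$: $a<c$ because $\ell_1<\ell_3$, $b<c$ because $\ell_2<\ell_3$, and $d<c$. So $\lambda_3\le\max\{a,b,d\}<c$.
\item If $\ell_2=\ell_3$, then $b=c$, so you must replace $b$ by the entry $(k_1,k_2,k_3)=(0,2,1)$. Its value $\pi^2(4/\ell_2^2+1/\ell_3^2)$ equals $d<c$.
\end{itemize}
In either case $\lambda_3<c$, so $c\notin\{\lambda_1,\lambda_2,\lambda_3\}$ and the set equality fails. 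With this repair, plus the termwise check that $d$ is the minimal competitor (which does go through), your proof is complete.
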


Theorem~\ref{main2} follows from Theorems~\ref{main0} and \ref{main1}, and from the scaling of the eigenvalues: for any $L>0$
$$
\lambda_j(L\Omega)=L^{-2}\lambda_j(\Omega).$$
\qed

\section{Proof of Theorem~\ref{main0}}\label{sec:main0}

The idea of the proof is the same for each of the functions $F_i$, $i=1,2,3$ and for both the volume and perimeter constraints. First, we prove that the only critical point for $F_i$ under the volume/perimeter constraint inside a suitable compact set $K\subset \R_+^3$ is given by $\ell=\ell_0$. Then we prove that outside $K$ the values of $F_i$ under the constraint are strictly larger than those achieved by $\ell_0$. This argument proves that the minimum of each of the functions $F_i$ is achieved at $\ell_0$ under volume or perimeter constraint.

\subsection{Volume constraint}

Here we consider $\ell\in \R^3_+$ satisfying the volume constraint
\[
V(\ell):= \ell_1\ell_2\ell_3 -1 = 0.
\]
 If we fix the set $K_M:=\{(\ell_1,\ell_2,\ell_3) \in \R_+^3: \ell_i\le M \;\forall i=1,2,3\}$ for an appropriate $M>0$ (to be chosen later), then $K_M\cap\{V(\ell)=0\}$ is compact. By the Weierstrass Theorem each of the continuous functions $F_i$ attains a minimum in $K_M\cap \{V(\ell)=0\}$. We prove that such minimum in $K_M\cap \{V(\ell)=0\}$ is attained at $\ell_0$, and that $\ell_0$ is also a global minimiser for each of the functions $F_i$, $i=1,2,3$.

\subsubsection{First symmetric function}
We consider $\ell\in {\rm Int}K_M$ with $M> \frac 32$, where by ${\rm Int}$ we denote the interior of a set. We recall that the first symmetric function is given by
\[
F_1(\ell) = a(\ell) + b(\ell) + c(\ell) = 2\pi^2 \Big(\frac{1}{\ell_1^2} +  \frac{ 1}{\ell_2^2} +\frac{1}{\ell_3^2}\Big).
\]
Using the method of Lagrange multipliers, if $\ell$ is a constrained critical point, then there exists $\eta\in \R$ such that
\[
\begin{cases}
    &\nabla F_1(\ell) = \eta\, \nabla V(\ell),\\
    & \ell_1 \ell_2 \ell_3  = 1.
\end{cases}
\]
By explicit computations we have
\[
\begin{cases}
    &-4 \pi^2  = \eta\, \ell_1^3 \ell_2 \ell_3= \eta  \ell_1^2,\\
    &-4 \pi^2  = \eta\, \ell_1 \ell_2^3 \ell_3= \eta \ell_2^2,\\
    &-4 \pi^2  = \eta\, \ell_1 \ell_2 \ell_3^3= \eta \ell_3^2,\\
    & \ell_1 \ell_2 \ell_3 = 1.
\end{cases}
\]
Clearly the only possibility is that $\ell_1 =\ell_2 = \ell_3$, meaning that $\ell_i=1$ for $i=1,2,3$, and $\ell=\ell_0$ is the only critical point in ${\rm Int} K_M$.

Now we consider a point $\ell \in \R^3_+\setminus {\rm Int} K_M$. Hence, at least one of its coordinates must be larger than $M$.  Without loss of generality we may assume that $\ell_1\le\ell_2\le\ell_3$, and then $\ell_3\ge M$.
By using the volume constraint, we can write $\ell_1 = \frac{1}{\ell_2\ell_3}$ and
\[
F_1(\ell) = 2\pi^2 \Big(\ell_2^2\ell_3^2+\frac{1}{\ell_2^2}+\frac{1}{\ell_3^2}\Big) >
2\pi^2 \Big( \ell_2^2\ell_3^2+\frac{1}{\ell_2^2}\Big).
\]
By using the fact that for every $a,b\ge 0$, $a^2 + b^2 \ge 2 ab$ and that $M> \frac32$, we have
\[
F_1(\ell) > 4 \pi^2\ell_3  \ge 4\pi^2M >6\pi^2,
\]
which is the value of $F_1$ for $\ell_0$.

\subsubsection{Second symmetric function}

We consider $\ell\in {\rm Int} K_M$ with $M>2\sqrt{3}$. We recall that the second symmetric function is given by
\begin{equation}
    \label{formula F_2 vol constraint}
\begin{split}
    F_2(\ell) &= a(\ell)b(\ell)+b(\ell)c(\ell)+a(\ell)c(\ell) \\&=
        \pi^4 \bigg(\frac{1}{\ell_2^2} +  \frac{1}{\ell_3^2} \bigg) \bigg(\frac{1}{\ell_1^2} + \frac{1}{\ell_3^2} \bigg)+\pi^4
         \bigg(\frac{1}{\ell_1^2} +  \frac{1}{\ell_3^2} \bigg) \bigg(\frac{1}{\ell_1^2} + \frac{1}{\ell_2^2} \bigg)\\&\qquad \qquad+
        \pi^4\bigg(\frac{1}{\ell_2^2} +  \frac{1}{\ell_3^2} \bigg) \bigg(\frac{1}{\ell_1^2} + \frac{1}{\ell_2^2} \bigg).
\end{split}
\end{equation}
Again, if $\ell$ is a critical point under volume constraint, there exists $\eta\in \R$ such that
\[
\begin{cases}
    &\nabla F_2(\ell) = \eta\, \nabla V(\ell),\\
    & \ell_1 \ell_2 \ell_3  = 1.
\end{cases}
\]
Now, $\partial_{\ell_1}F_2(\ell)-\eta\partial_{\ell_1}V(\ell)$ reads
\begin{equation}\label{first_c}
-  \frac{2\pi^2}{\ell_1^3} \bigg(2 \Big(\frac{\pi}{\ell_1} \Big)^2 + 3\Big( \frac{\pi}{\ell_3} \Big)^2 +3 \Big( \frac{\pi}{\ell_2} \Big)^2 \bigg) = \eta \ell_2 \ell_3.
\end{equation}
Using the following notation
\begin{equation}
\label{def ABC}
    A:= \Big( \frac{\pi}{\ell_1} \Big)^2 \quad B:= \Big( \frac{\pi}{\ell_2} \Big)^2 \quad C:= \Big( \frac{\pi}{\ell_3} \Big)^2
\end{equation}
and the fact that $\ell_1 \ell_2 \ell_3 = 1$, \eqref{first_c} can be written as
\[
- 2 A (2A+3B+3C) = \eta \ell_1 \ell_2 \ell_3 = \eta.
\]
Repeating the same computation for the other two components of $\nabla F_2(\ell)=\eta\nabla V(\ell)$, we get
\begin{equation}
    \label{Second,volume,eq1}
\begin{cases}
    & - 2 A (2A+3B+3C) = \eta, \\
    &- 2 B (2B+3A+3C) =  \eta ,\\
    &- 2 C (2C+3A+3B) = \eta .
\end{cases}
\end{equation}
By matching the first two equations we get
\[
2A^2 + 3AB + 3AC = 2B^2 + 3AB + 3BC,
\]
yielding
\[
(A-B) (2 (A+B) + 3C) = 0.
\]
Since $A,B,C>0$ it follows that $A=B$. By matching all the equations in \eqref{Second,volume,eq1}, we get that $A=B=C$, implying that $\ell_i = 1$ for $i=1,2,3$.

Now we estimate the value of $F_2$ at a point $\ell \in \R^3_+\setminus {\rm Int} K_M$.  By neglecting the last two terms in the right hand side of \eqref{formula F_2 vol constraint} we get
\[
\begin{split}
    F_2(\ell)
    \ge \pi^4 
    \frac{\ell_3^4+\ell_1^2\ell_2^2+\ell_1^2\ell_3^2+\ell_2^2\ell_3^2}{\ell_3^2(\ell_1\ell_2\ell_3)^2} \ge \pi^4\, \ell_3^2 .
\end{split}
\]
 We assume without loss of generality that $\ell_1\le\ell_2\le\ell_3$. Since \(\ell\not\in {\rm Int}K_M\), we then have \(\ell_3 \ge M\). Recalling that \(M>2\sqrt{3}\), we obtain
\[
F_2(\ell)\ge \pi^4 M^2 > 12 \pi^4,
\]
which is the value of \(F_2(\ell)\) for $\ell=\ell_0$.

\subsubsection{Third symmetric function}
 We recall that the third symmetric function is given by
\[
F_3(\ell) =a(\ell)b(\ell)c(\ell)=\pi^6 \left(\frac{1}{\ell_2^2}+\frac{1}{\ell_3^2}\right) \left(\frac{1}{\ell_1^2}+\frac{1}{\ell_3^2}\right)
\left(\frac{1}{\ell_1^2}+\frac{1}{\ell_2^2}\right).
\]
We consider $\ell\in{\rm Int}K_M$ with $M>2$. If $\ell$ is a constrained critical point, then there exists $\eta\in \R$ such that
\[
\begin{cases}
    &\nabla F_3(\ell) = \eta\, \nabla V(\ell),\\
    & \ell_1 \ell_2 \ell_3  = 1.
\end{cases}
\]
Now, $\partial_{\ell_1}F_3(\ell)-\eta\partial_{\ell_1}V(\ell)$  reads
\[
 -\frac{ 2\pi^2}{\ell_1^3} \bigg( \Big( \frac{\pi}{\ell_2} \Big)^2 +  \Big( \frac{ \pi}{\ell_3} \Big)^2 \bigg) 
 \bigg(2 \Big( \frac{\pi}{\ell_1} \Big)^2 + \Big( \frac{ \pi}{\ell_2} \Big)^2 + \Big( \frac{ \pi}{\ell_3} \Big)^2 \bigg)  = \eta \ell_2 \ell_3.
\]
Defining $A,B,C$ as in \eqref{def ABC} and using the volume constraint, the equation is rewritten as
\[
-2 A (B+C)(2A+B+C) = \eta \ell_1 \ell_2 \ell_3 = \eta.
\]
Repeating the argument for the other components of $\nabla F_3(\ell)=\eta\nabla V(\ell)$ we find
\[
\begin{cases}
    &-2 A (B+C)(2A+B+C)=\eta ;\\
    & -2 B (A+C)(2B+A+C)= \eta ;\\
    & -2 C (A+B)(2C+A+B) = \eta.
\end{cases}
\]
By matching the first two equations we have
\[
2A^2B + 2A^2C + AB^2 + 2 ABC  + AC^2 = 2AB^2 + 2B^2C + A^2B + 2 ABC  + BC^2,
\]
so that
\[
(A-B) [AB+2C (A+B)+C^2] = 0.
\]
Since $A,B,C>0$, we get $A = B$. By matching also the other equations we obtain that $A=B=C$, implying that $\ell_i = 1$ for $i=1,2,3$.

Now we estimate $F_3(\ell)$ when $\ell\in \R^3_+\setminus {\rm Int}K_M$. Without loss of generality, we may assume $\ell_1\le\ell_2\le\ell_3$, and thus $\ell_3\ge M$. By using the volume constraint $\ell_1=\frac{1}{\ell_2\ell_3}$, the fact that $\ell_1\leq\ell_2$ and
\begin{multline*}
F_3(\ell) = \pi^6 \frac{(\ell_1^2+\ell_2^2)(\ell_1^2+\ell_3^2)(\ell_2^2+\ell_3^2)}{(\ell_1\ell_2\ell_3)^4}\\=\pi^6 \bigg(\frac{1}{\ell_2^2\ell_3^2}+\ell_2^2\bigg)\bigg(\frac{1}{\ell_2^2\ell_3^2}+\ell_3^2\bigg)(\ell_2^2+\ell_3^2) \ge \pi^6\ell_2^2\ell_3^2(\ell_2^2+\ell_3^2),
\end{multline*}
we find
$$
F_3(\ell)\ge\pi^6\ell_3^3>8\pi^6,
$$
 which is the value of $F_3$ in $\ell_0$, since $M>2$ .

\subsection{Perimeter constraint}

Here we consider $\ell\in \R^3_+$ satisfying the perimeter constraint
\[
P(\ell):= \ell_1\ell_2+\ell_1\ell_3+\ell_2\ell_3 -3 = 0.
\]
Let
\[
K_M:=\{(\ell_1,\ell_2,\ell_3) \in \R_+^3:  \frac{1}{M}\le \ell_i\le M \;\forall i=1,2,3\}
\]
where $M>0$ will be fixed later. By the Weierstrass Theorem, each of the continuous functions $F_i$ attains a minimum in $K_M\cap \{P(\ell)=0\}$. We prove that such minimum in $K_M\cap \{P(\ell)=0\}$ is attained at $\ell=\ell_0=(1,1,1)$, and that $\ell_0$ is also a global minimiser for each of the $F_i$, $i=1,2,3$.

\subsubsection{First symmetric function}
We consider $\ell\in {\rm Int} K_M$ with $M> 3 \sqrt{3/2} $.
If $\ell$ is a constrained critical point, then there exists $\eta\in \R$ such that
\[
\begin{cases}
    &\nabla F_1(\ell) = \eta\, \nabla P(\ell),\\
    &\ell_1\ell_2+\ell_1\ell_3+\ell_2\ell_3 =3 .
\end{cases}
\]
Taking the first component in the equation $\nabla F_1(\ell)=\eta\nabla P(\ell)$, we get
\[
-4\frac{\pi^2}{\ell_1^3} =  \eta (\ell_2+\ell_3),
\]
and using the perimeter constraint we have
\[
- 4 \pi^2 = \eta (\ell_1 \ell_2 + \ell_1\ell_3 )\ell_1^2 =  \eta (3-\ell_2\ell_3) \ell_1^2.
\]
Doing the same for the other components of $\nabla F_1(\ell)=\eta\nabla P(\ell)$ we get
\begin{equation}\label{sysF1P}
\begin{cases}
    & \frac{- 4 \pi^2}{\eta} =  (3-\ell_2\ell_3) \ell_1^2;\\
    & \frac{- 4 \pi^2}{\eta} = (3-\ell_1\ell_3) \ell_2^2;\\
    & \frac{- 4 \pi^2}{\eta} = (3-\ell_1\ell_2) \ell_3^2;\\
    & \ell_1 \ell_2 + \ell_2 \ell_3 +\ell_1 \ell_3 = 3.
\end{cases}
\end{equation}
 We assume without loss of generality that $\ell_1\le\ell_2\le\ell_3$. Taking pairwise differences of the equations in system \eqref{sysF1P}, we obtain
 \begin{equation}\label{system_per}
\begin{cases}
    & \big(3(\ell_1+\ell_2) - \ell_1\ell_2 \ell_3 \big)(\ell_1-\ell_2) = 0;\\
    & \big(3(\ell_1+\ell_3) - \ell_1\ell_2 \ell_3 \big)(\ell_1-\ell_3) = 0;\\
    & \big(3(\ell_2+\ell_3) - \ell_1\ell_2 \ell_3 \big)(\ell_2-\ell_3) = 0.
\end{cases}
\end{equation}
If $\ell_1=\ell_2=\ell_3$ then we are done, since the constraint imposes $\ell_i=1$. Otherwise, since we assumed $\ell_1\le\ell_2\le\ell_3$, we surely have that $\ell_1\neq \ell_3$, and at least one of the two conditions $\ell_1\neq \ell_2$ or $\ell_2\neq \ell_3$ is satisfied. Assume for example that $\ell_1\neq\ell_2$ (the other case is analogous). Then, system \eqref{system_per} yields
\[
3(\ell_1+\ell_2) = \ell_1\ell_2 \ell_3 \quad \text{and} \quad 3(\ell_1+\ell_3) = \ell_1\ell_2 \ell_3,
\]
that implies $\ell_2=\ell_3$. However, the first equation in system \eqref{system_per} and the perimeter constraint give us the system
$$
\begin{cases}
3(\ell_1+\ell_2) = \ell_1\ell_2^2 ,\\
2\ell_1\ell_2+\ell_2^2=3,
\end{cases}
$$
that does not admit any real solution. Thus, the only possibility is that $\ell_1=\ell_2=\ell_3=1$.

Now,  take a point $\ell \in \R^3_+\setminus {\rm Int}K_M$. We may assume without loss of generality that $\ell_1\le\ell_2\le\ell_3$. Then either one of the three edges is larger than $M$ or one of the edges is smaller than $1/M$ (in our notation $\ell_3\ge M$ or $\ell_1 \le \frac{1}{M}$).
Let us consider the first case, namely the one where $l_3\ge M$. From the perimeter constraint we deduce that $\ell_i\ell_j <3$ for $i\neq j$. In particular, this implies that $\ell_1<3/\ell_3$ and $\ell_2<3/\ell_3$, and then
\[
F_1(\ell)
>
2\pi^2 \Big( 2 \frac{\ell_3^2}{9}+\frac{1}{\ell_3^2} \Big) 
>
\frac{4\pi^2}{9}\ell_3^2. 
\]
Since $\ell_3\ge M >3 \sqrt{3/2}$, we obtain
\[
F_1(\ell) >6\pi^2,
\]
which is the value of $F_1$ for $\ell_0$.
Now we study the case where $\ell_1 \le \frac{1}{M}$. Since $\ell_2^2,\ell_3^2\ge 0$ and $M \ge 3\sqrt{3/2}$ we have
\[
F_1(\ell) \ge \frac{2 \pi^2 }{\ell_1^2}\ge
2 \pi^2 M^2 \ge 27 \pi^2,
\]
which is larger than the value of $F_1(\ell_0)=6\pi^2$.

\subsubsection{Second symmetric function}

We consider $\ell\in {\rm Int} K_M$ with $M>3 \sqrt[4]{12} $.
If $\ell$ is a constrained critical point, then there exists $\eta\in \R$ such that
\[
\begin{cases}
    &\nabla F_2(\ell) = \eta\, \nabla P(\ell),\\
    &\ell_1\ell_2+\ell_1\ell_3+\ell_2\ell_3 =3 .
\end{cases}
\]
Using the perimeter constraint, the first component of $\nabla F_2(\ell)=\eta\nabla P(\ell)$ reads 
\[
-  \frac{2\pi^2}{\ell_1^2} \bigg(2 \Big(\frac{\pi}{\ell_1} \Big)^2 + 3\Big( \frac{\pi}{\ell_3} \Big)^2 +3 \Big( \frac{\pi}{\ell_2} \Big)^2 \bigg) = \eta (\ell_2+ \ell_3)\ell_1 =  \eta (3-\ell_2\ell_3),
\]
from which we deduce that $\eta<0$. We note that this equation is equivalent to
\[
-2\frac{\pi^4}{\ell_1^2(\ell_1\ell_2\ell_3)^2} (2\ell_2^2\ell_3^2+3\ell_1^2\ell_3^2+3\ell_1^2\ell_2^2) +\eta \ell_2\ell_3= 3\eta .
\]
Set
\begin{equation}
\label{def abc}
    a := \ell_2\ell_3, \quad b:= \ell_1\ell_3, \quad c:=\ell_1\ell_2,
\end{equation}
so that the equation is rewritten as
\[
- \frac{2\pi^4}{(bc)^2} \big( 2a^2+3b^2+3c^2 \big) + \eta a  = 3 \eta .
\]
Repeating the same procedure for the other components of the equation $\nabla F_2(\ell)=\eta\nabla P(\ell)$, we get
\[
\begin{cases}
    &- \frac{2\pi^4}{(bc)^2} \big( 2a^2+3b^2+3c^2 \big) + \eta \,a = 3\eta ,\\
    &-\frac{2\pi^4}{(ac)^2} \big( 2b^2+3a^2+3c^2 \big) + \eta \,b = 3\eta ;\\
    &- \frac{2\pi^4}{(ab)^2} \big( 2c^2+3a^2+3b^2 \big) + \eta \,c = 3\eta .\\
\end{cases}
\]
By subtracting the second equation to the first one we have
\[
- 2\pi^4 \big( 2\,(a^4-b^4) +3(a^2-b^2)\, c^2 \big) + \eta \,(a-b) (abc)^2 = 0.
\]
If we assume by contradiction that $a\neq b$ we can divide the above equality by $a-b$ and we have
\[
-2\pi^4 \big( 2\,(a^2+b^2)(a+b) +3(a+b)\, c^2 \big) + \eta \, (abc)^2 =0,
\]
deducing that $\eta>0$, which is a contradiction. Consequently, $a=b$ that implies that $\ell_1= \ell_2$. Using the other equations we obtain $\ell_i=1$ for $i=1,2,3$. 

Now, if we take a point $\ell \in \R^3_+\setminus {\rm Int}K_M$ and we assume  that $\ell_1\le\ell_2\le\ell_3$, then either $\ell_3\ge M$ or $\ell_1 \le \frac{1}{M}$.
We consider at first the case where $\ell_3\ge M$. From the perimeter constraint $\ell_1<3/\ell_3$ and $\ell_2<3/\ell_3$ and then, using also that $1/\ell_3^2>0$, we have
\[
F_2(\ell) >
\pi^4 \bigg(\frac{1}{\ell_2^2} +  \frac{1}{\ell_3^2} \bigg) \bigg(\frac{1}{\ell_1^2} + \frac{1}{\ell_3^2} \bigg) > \frac{\pi^4}{\ell_1^2\ell_2^2} > \frac{\pi^4}{81}\ell_3^4.
\]
Recalling that $\ell_3\ge M> 3 \sqrt[4]{12} $ we have 
\[
F_2(\ell) > \frac{\pi^4}{81}M^4>12\pi^4,
\]
which is the value of $F_2$ for $\ell_0$.

In the case where $\ell_1\le\frac{1}{M}$ we simplify $F_2$ to have that
\[
F_2(\ell) > \frac{\pi^4}{\ell_1^4} \ge \pi^4M^4 > 12 \pi^4,
\]
which is the value of $F_2$ in $\ell_0$.

\subsubsection{Third symmetric function}

We consider $\ell\in{\rm Int} K_M$ with $M^4>4\cdot 3^5$. 
Assuming that $\ell$ is a constrained critical point, we have that there exists $\eta\in \R$ such that
\[
\begin{cases}
    &\nabla F_3(\ell) = \eta\, \nabla P(\ell),\\
    &\ell_1\ell_2+\ell_1\ell_3+\ell_2\ell_3 =3 .
\end{cases}
\]
The first component of $\nabla F_3(\ell)=\eta\nabla P(\ell)$ reads
\[
-\frac{ 2\pi^2}{\ell_1^2} \bigg( \Big( \frac{\pi}{\ell_2} \Big)^2 +  \Big( \frac{ \pi}{\ell_3} \Big)^2 \bigg) \bigg( 2\Big( \frac{\pi}{\ell_1} \Big)^2 + \Big( \frac{\pi}{\ell_2} \Big)^2 +  \Big( \frac{ \pi}{\ell_3} \Big)^2 \bigg) 
 =\eta \ell_1(\ell_2+ \ell_3)  = \eta (3 - \ell_2 \ell_3),
\]
from which we deduce that $\eta <0$.
Then, using the notation introduced in \eqref{def abc}
we rewrite the equation as follows
\[
-2 \frac{\pi^6}{\ell_1^2} \frac{\ell_2^2+\ell_3^2}{a^2} \frac{2a^2+b^2+c^2}{abc} +\eta\,a= 3 \eta,
\]
that is, observing that $\ell_1^2 = (bc)/a$  and $\ell_2^2+\ell_3^2=\frac{b^2+c^2}{\ell_1^2}$
\[
- \frac{2\pi^6}{(abc)^3}a^2(b^2+c^2) (2a^2+b^2+c^2) + \eta\,a = 3 \eta .
\]
Repeating the same procedures for the other components of $\nabla F_3(\ell)=\eta\nabla P(\ell)$ we find the system

\[
\begin{cases}
    &- \frac{2\pi^6}{(abc)^3}a^2(b^2+c^2) (2a^2+b^2+c^2) + \eta\,a = 3 \eta,\\
    &- \frac{2\pi^6}{(abc)^3}b^2(a^2+c^2) (a^2+2b^2+c^2) + \eta\,b = 3 \eta,\\
    &- \frac{2\pi^6}{(abc)^3}c^2(a^2+b^2) (a^2+b^2+2c^2) + \eta\,c = 3 \eta .\\
\end{cases}
\]

Matching the first two equations we obtain
\[
- \frac{2\pi^6 }{(abc)^3}\bigg[a^2 (b^2+c^2) (2a^2+b^2+c^2) - b^2 (a^2+c^2) (2b^2+a^2+c^2)\bigg] + \eta\,(a-b) = 0,
\]
which gives
\[
- \frac{2\pi^6 }{(abc)^3} \big[ 2c^2(a^4-b^4) + c^4 (a^2-b^2) + a^2b^2(a^2-b^2) \big]  + \eta\,(a-b) = 0.
\]
If, by contradiction, we assume that $a\neq b$, we can divide by $a-b$ and we have
\[
-\frac{\pi^6 }{(abc)^3} \big[ 2c^2(a^2+b^2)(a+b) + c^4 (a+b) + a^2b^2(a+b) \big]  +\eta = 0,
\]
which implies that $\eta>0$ that is a contradiction. Consequently, $a=b$ and thus $\ell_1=\ell_2$. By studying the other equations we deduce that $\ell_i=1$ for $i=1,2,3$.

Now, if we take a point $\ell \in \R^3_+\setminus {\rm Int}K_M$ and we assume  that $\ell_1\le\ell_2\le \ell_3$, then either $\ell_3\ge M$ or $\ell_1 \le \frac{1}{M}$ (or both).

We consider at first the case where $\ell_3\ge M$. From the perimeter constraint $\ell_1<3/\ell_3$ and $\ell_2<3/\ell_3$.  This gives $\frac{1}{\ell_2^2}+\frac{1}{\ell_3^2}\geq\frac{2}{\ell_2\ell_3}\geq\frac 23$. Then, recalling also that $1/\ell_3^2>0$ we have
\[
    F_3(\ell) >\frac{2\pi^6}{3\ell_1^4}> \frac{2\pi^6}{3^5}M^4.
\]
Since $\ell_3\ge M$ and $M^4>4\cdot 3^5$ we have
\[
F_3(\ell)  >8 \pi^6,
\]
which is the value of $F_3$ in $\ell_0$.

Now we study the case where $\ell_1\le\frac{1}{M}$. Since $\ell_1\le \ell_2\le \ell_3$ we have
\[
    F_3(\ell) >\frac{2\pi^6}{3}M^4,
\]
which is larger than the value of $F_3(\ell_0)=8\pi^6$.

\qed

\section{Proof of Theorem~\ref{main1}}\label{sec:main1}
We start by considering the region
$$
R_{123}=\{(x,y,z)\in\mathbb R^3_+:x\leq y\leq z\}.
$$
If $\ell=(\ell_1,\ell_2,\ell_3)$, setting $x=\ell_1^2$, $y=\ell_2^2$, $z=\ell_3^2$, we have
\begin{align*}
&a(\ell)=g_1(x,y,z):=\pi^2\left(\frac 1y+\frac 1z\right),\\
&b(\ell)=g_2(x,y,z):=\pi^2\left(\frac 1x+\frac 1z\right),\\
&c(\ell)=g_3(x,y,z):=\pi^2\left(\frac 1x+\frac 1y\right),
\end{align*}
and in $R_{123}$ we have
$$
g_1(x,y,z)\leq g_2(x,y,z)\leq g_3(x,y,z).
$$
Recall that the Maxwell's  spectrum  of  $\Omega(\ell)$, in this notation, is given by
\begin{enumerate}[i)]
    \item $\pi^2\left(\frac{k_1^2}{x}+\frac{k_2^2}{y}+\frac{k_3^2}{z}\right)$ counted once, with $k_i\in\mathbb N$, and exactly one $k_i=0$;
    \item $\pi^2\left(\frac{k_1^2}{x}+\frac{k_2^2}{y}+\frac{k_3^2}{z}\right)$  counted twice, with $k_i\in\mathbb N$, $k_i\geq 1$.
\end{enumerate}
Then we have that $\lambda_i(\Omega(\ell))=g_i(x,y,z)$ if and only if 
\begin{equation*}\label{conditions}
\max\left\{\frac 1x+\frac 1y,\frac 1x+\frac 1z,\frac 1y+\frac 1z\right\} \le \min\left\{\frac 4x+\frac 1y,\frac 4x+\frac 1z,\frac 1x+\frac 4y,\frac 4y+\frac 1z,\frac 1x+\frac 4z,\frac 1y+\frac 4z\right\},
\end{equation*}
which is equivalent to the condition
\begin{equation*}\label{conditions2}
   \frac 1x\leq\frac 4z .
\end{equation*}
Hence we have that $\lambda_i(\Omega(\ell))=g_i(x,y,z)$, $i=1,2,3$ if and only if
$$
(x,y,z)\in R_{123}\cap\left\{z\leq 4x\right\}.
$$
We have completed the analysis in the case $0<x\leq y\leq z$. Now, take all permutations of the coordinates $(x,y,z)$. Then we conclude that $\lambda_i(\Omega(\ell))=g_{j(i)}(x,y,z)$ for some permutation $j:\{1,2,3\}\to\{1,2,3\}$ if and only if
$$
(x,y,z)\in\mathcal Q=\{(x,y,z)\in\R^3_+:\, \max\{x,y,z\}\le 4\min\{x,y,z\}\}.
$$
The proof of the theorem is concluded recalling that $x=\ell_1^2$, $y=\ell_2^2$, $z=\ell_3^2$.

\qed

\section{Proof of Theorem~\ref{AutovSingolarmente}}\label{app:A}
Let $\Omega(\ell)$, $\ell=(\ell_1,\ell_2,\ell_3)\in\mathbb R_+^3$, be a cuboid. Once we impose the volume or perimeter constraint, we can write $\ell_3$ as a function of $\ell_1$ and $\ell_2$, and we can understand the eigenvalues as functions of the pair $(\ell_1,\ell_2)\in\R^2_+:=\{(x,y)\in\mathbb R^2:x>0,y>0\}$. Indeed, if we impose the volume constraint $\ell_1\ell_2\ell_3=1$, we can write
\[
\ell_3 = \frac{1}{\ell_1\ell_2},
\]
while if we impose the perimeter constraint $\ell_1\ell_2+\ell_1\ell_3+\ell_2\ell_3=3$ we can write
\[
\ell_3 = \frac{3-\ell_1\ell_2}{\ell_1+\ell_2}.
\]
It is not restrictive to consider cuboids of volume $1$ when considering volume constraint, and cuboids of perimeter $6$ when considering perimeter constraint.
Note that when the perimeter constraint is imposed, we get an additional condition on $\ell_1$ and $\ell_2$: since $\ell_3>0$, necessarily $\ell_1\ell_2<3$.

Thus, the proof of Theorem~\ref{AutovSingolarmente} is recast to the study of suitable functions of two real variables with domain $\mathbb R^2_+$ (volume constraint) or $\mathbb R^2_+\cap\{\ell_1\ell_2<3\}$ (perimeter constraint). The Lipschitz continuity of $\lambda_k(\Omega(\ell))$, $k=1,2,3$, in a neighborhood of $\ell_0=(1,1,1)$ is just a consequence of the fact that in a neighborhood of $\ell_0$ these functions are respectively the minimum, median and maximum of three smooth functions $a(\ell)$, $b(\ell)$, $c(\ell)$ .

\subsection{The first eigenvalue under volume constraint} We start by analyzing the behaviour of the first eigenvalue under volume constraint. As we mentioned, by using the volume constraint $\ell_3=\frac{1}{\ell_1\ell_2}$ we understand $\lambda_1(\Omega(\ell))$ as a function of $\ell_1$ and $\ell_2$ in $\R^2_+$. Recall that the first eigenvalue is given by
$$
    \lambda_1(\Omega(\ell))=\min\left\{a(\ell),b(\ell),c(\ell)\right\},
$$
where $a,b,c$ have been defined in \eqref{symmetric_F}.
Consider the three regions
\begin{eqnarray*}
&&R_1=\{(\ell_1,\ell_2)\in\mathbb R^2_+:\ell_1\leq\min\{\ell_2,\ell_2^{-1/2}\}\},\\
&&R_2=\{(\ell_1,\ell_2)\in\mathbb R^2_+:\ell_2\leq\min\{\ell_1,\ell_1^{-1/2}\}\},\\
&&R_3=\{(\ell_1,\ell_2)\in\mathbb R^2_+:\ell_1\geq\ell_2^{-1/2}{\rm \ and\ }\ell_2\geq\ell_1^{-1/2}\},
\end{eqnarray*}
see Figure~\ref{regions}. We see that $\mathbb R^2_+=R_1\cup R_2\cup R_3$. Then
$$
\lambda_1(\Omega(\ell))=
\begin{cases}
a(\ell)=\pi^2\left(\frac{1}{\ell_2^2}+\ell_1^2\ell_2^2\right) & {\rm if\ }(\ell_1,\ell_2)\in R_1,\\
b(\ell)=\pi^2\left(\frac{1}{\ell_1^2}+\ell_1^2\ell_2^2\right) & {\rm if\ }(\ell_1,\ell_2)\in R_2,\\
c(\ell)=\pi^2\left(\frac{1}{\ell_1^2}+\frac 1{\ell_2^2}\right) & {\rm if\ }(\ell_1,\ell_2)\in R_3.
\end{cases}
$$

\begin{figure}
    \centering
    \includegraphics[width=0.5\textwidth]{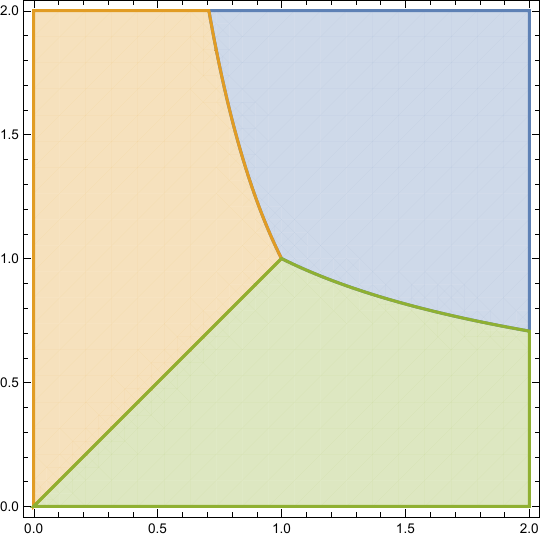}
    \caption{The regions $R_1$ (yellow), $R_2$ (green) and $R_3$ (blue).}
    \label{regions}
\end{figure}

We restrict to the region $R_3$ and consider 
$$
F(\ell_1,\ell_2):=c(\ell)=\pi^2\left(\frac 1{\ell_1^2}+\frac 1{\ell_2^2}\right).
$$
 Fix $c>0$ and consider 
\[
F|_{\{\ell_1=c\}} (\ell_1,\ell_2) =\pi^2 \left( \frac{1}{c^2}+ \frac{1}{\ell_2^2}\right) =:F_c(\ell_2).
\]
Then $F_c$ is strictly decreasing in $\ell_2$ for all $c>0$. Thus, any local maximum of $F$ in $R_3$ is attained on $\partial R_3$. Note that $\partial R_3$  is the disjoint union of two smooth curves intersecting in $(\ell_1,\ell_2)=(1,1)$:
$$
\Gamma_1=\{(\ell_1,\ell_2):\ell_1=\ell_2^{-1/2}\,,\ell_1\leq 1\},\ \ \ \ \ \ \Gamma_2=\{(\ell_1,\ell_2):\ell_2=\ell_1^{-1/2}\,,\ell_1\geq 1\}.
$$
Note also that $\Gamma_1=R_3\cap R_1$ and $\Gamma_2=R_3\cap R_2$. Consider now the restriction of $F$ to $\Gamma_2$, namely the function $G(\ell_1):=\pi^2\left(\frac1{\ell_1^2}+\ell_1\right)$, defined on $[1,+\infty)$. We see that
\begin{itemize}
\item $G$ has a local maximum at the boundary point $\ell_1=1$;
\item $\lim_{\ell_1\to+\infty}G(\ell_1)=+\infty$;
\item $G$ has a local minimum at $\ell_1=2^{1/3}$.
\end{itemize}
The same analysis holds when we restrict $F$ to $\Gamma_1$, just exchanging the roles of $\ell_1$ and $\ell_2$. We conclude that $(1,1)$ is a point of local maximum for $F$ in $R_3$.

See Figure~\ref{F_R3} for the plot of $F$ on its domain of definition $R_3$.
\begin{figure}
    \centering
\includegraphics[width=0.5\textwidth]{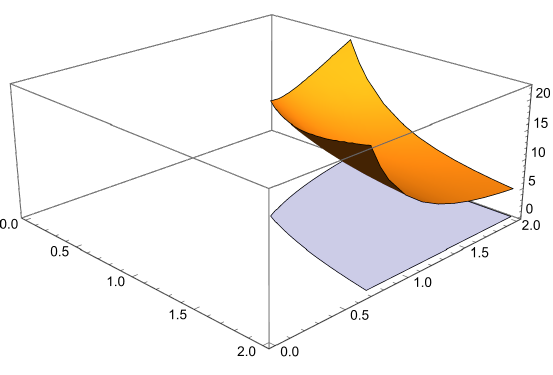}
    \caption{The function $F$ defined in $R_3$.}
    \label{F_R3}
\end{figure}
The above analysis can be carried out in the other two regions $R_1,R_2$ and it actually amounts to a suitable change of variables in $R_3$. Altogether, we deduce that the function
$$
(\ell_1,\ell_2)\mapsto\lambda_1(\Omega(\ell))\,,\ \ \ \ell=\left(\ell_1,\ell_2,\frac 1{\ell_1\ell_2}\right)
$$
defined in $\mathbb R^2_+$ behaves as follows:
\begin{itemize}
\item it is smooth in $\mathbb R^2_+\setminus\{\Gamma_1,\Gamma_2,\Gamma_3\}$, where $\Gamma_3=\{(\ell_1,\ell_2):\ell_1=\ell_2\,,\ell_1\leq 1\}$ (note that $\Gamma_3=R_1\cap R_2$);
\item it has a local maximum at $(\ell_1,\ell_2)=(1,1)$, and the value at this point is $2\pi^2$; here $\lambda_1(\Omega(\ell))$ is not smooth;
\item it has three saddle points at $(\ell_1,\ell_2)=(2^{1/3},2^{-1/6})$, $(2^{-1/6},2^{1/3})$ and $(2^{-1/6},2^{-1/6})$; here $\lambda_1(\Omega(\ell))$ is not smooth;
\item $\sup_{\mathbb R^2_+}\lambda_1(\Omega(\ell))=+\infty$;
\item $\inf_{\mathbb R^2_+}\lambda_1(\Omega(\ell))=0$.
\end{itemize}
See Figure~\ref{first_vol} for a plot of $\lambda_1(\Omega(\ell))$ for $\ell=(\ell_1,\ell_2,\frac 1{\ell_1\ell_2})$ in the region $\mathbb R^2_+$.

\begin{figure}
    \centering
    \includegraphics[width=0.5\textwidth]{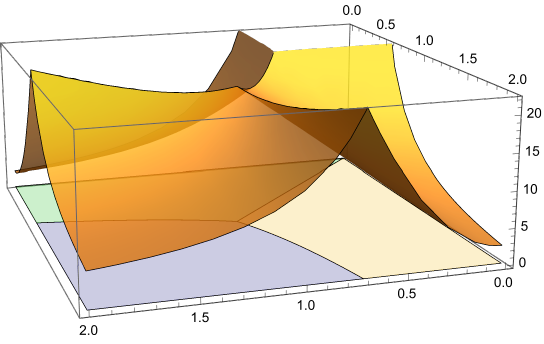}
    \caption{First eigenvalue of cuboids under volume constraint. Precisely, the plot represents $\lambda_1(\Omega(\ell))$ for $\Omega(\ell)=(0,\ell_1)\times(0,\ell_2)\times(0,\frac{1}{\ell_1\ell_2})$. }
    \label{first_vol}
\end{figure}

\medskip

\subsection{The first eigenvalue under perimeter constraint} Next, we consider the perimeter constraint. Hence, we set $\ell_3=\frac{3-\ell_1\ell_2}{\ell_1+\ell_2}$. We consider then the function
$$
(\ell_1,\ell_2)\mapsto\lambda_1(\Omega(\ell))\,,\ \ \ \ell=\left(\ell_1,\ell_2,\frac{3-\ell_1\ell_2}{\ell_1+\ell_2}\right)
$$
restricted to
$$
R=\{(\ell_1,\ell_2)\in\mathbb R^2_+:\ell_1\ell_2<3\}.
$$
Now, $R=R_1\cup R_2\cup R_3$ where
\begin{eqnarray*}
&&R_1=\left\{(\ell_1,\ell_2)\in\mathbb R^2_+:\ell_1\leq\min\left\{\ell_2,\frac{3-\ell_1\ell_2}{\ell_1+\ell_2}\right\}\right\},\\
&&R_2=\left\{(\ell_1,\ell_2)\in\mathbb R^2_+:\ell_2\leq\min\left\{\ell_1,\frac{3-\ell_1\ell_2}{\ell_1+\ell_2}\right\}\right\},\\
&&R_3=\left\{(\ell_1,\ell_2)\in\mathbb R^2_+:0<\frac{3-\ell_1\ell_2}{\ell_1+\ell_2}\leq\min\left\{\ell_1,\ell_2\right\}\right\},
\end{eqnarray*}
see Figure~\ref{regionsP}.
\begin{figure}
    \centering
    \includegraphics[width=0.5\textwidth]{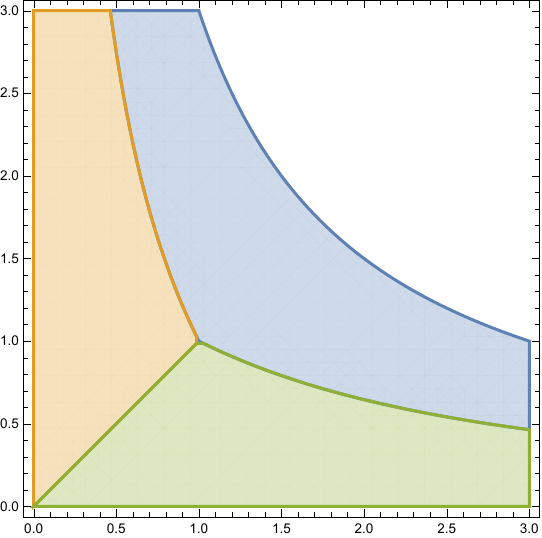}
    \caption{The regions $R_1$ (yellow), $R_2$ (green) and $R_3$ (blue).}
    \label{regionsP}
\end{figure}
We have
$$
\lambda_1(\Omega(\ell))=
\begin{cases}
a(\ell)=\pi^2\left(\frac{1}{\ell_2^2}+\left(\frac{\ell_1+\ell_2}{3-\ell_1\ell_2}\right)^2\right) & {\rm if\ }(\ell_1,\ell_2)\in R_1;\\
b(\ell)=\pi^2\left(\frac{1}{\ell_1^2}+\left(\frac{\ell_1+\ell_2}{3-\ell_1\ell_2}\right)^2\right) & {\rm if\ }(\ell_1,\ell_2)\in R_2;\\
c(\ell)=\pi^2\left(\frac{1}{\ell_1^2}+\frac 1{\ell_2^2}\right) & {\rm if\ }(\ell_1,\ell_2)\in R_3.
\end{cases}
$$
We restrict to $R_3$ and consider  the function
$$
F(\ell_1,\ell_2):=c(\ell)=\pi^2\left(\frac{1}{\ell_1^2}+\frac{1}{\ell_2^2}\right).
$$

Observe that $F$ is smooth in the interior of $R_3$ and, for any fixed $c>0$, the function
\[
 F|_{\{\ell_1=c\}}(\ell_1,\ell_2)=\pi^2\left(\frac{1}{c^2}+\frac{1}{\ell_2^2}\right)=:F_c(\ell_2) 
\]
is strictly decreasing in $\ell_2$. It follows that any local maximum of $F$ in $R_3$ is attained on one of the two curves $\Gamma_1 := R_3 \cap R_1$ or $\Gamma_2 := R_3 \cap R_2$, while  the infimum has to be searched on the curve $\{(\ell_1,\ell_2) \in \mathbb{R}^2_+ : \ell_1 \ell_2 = 3\}$.

In order to find a local maximum of $F$ in $R_3$ we  restrict our analysis to $\Gamma_1 \cup \Gamma_2$. Consider the restriction of $F$ to $\Gamma_1$, namely the function 
\[
G(\ell_1):=F\left(\ell_1,\frac{3-\ell_1^2}{2\ell_1}\right),\quad \ell_1\in(0,1].
\]
We see that $G$ has a local maximum at $\ell_1=1$ and a global minimum at $\ell_1=\bar\ell$, where $\bar\ell \approx 0.906$ is the unique positive root of $5x^6+3x^4+27x^2-27=0$. Moreover, $\lim_{\ell_1\to 0}G=+\infty$. Note that $F|_{\Gamma_2}$ presents the same behaviour, just exchanging the roles of $\ell_1$ and $\ell_2$. 

To determine the infimum of $F$ in $R_3$ we restrict to the curve $\{(\ell_1,\ell_2) \in \R^2_+:\,\ell_1\ell_2=3\}$. If we define $H(\ell_1):=F(\ell_1,\frac 3{\ell_1})$, $\ell_1\in(0,+\infty)$ as the restriction of $F$ to the curve $\ell_1\ell_2=3$, we see that $H$ has a unique global minimum at $\ell_1=\sqrt{3}$ and $H(\ell_1)\to+\infty$ as $\ell_1\to 0$ or $\ell_1\to +\infty$.

The analysis in the regions $R_1,R_2$ is carried out analogously as it can be recast to the analysis in $R_3$ through a change of variables. Altogether we deduce that the function $\lambda_1(\Omega(\ell))$, understood as a function of two variables $(\ell_1,\ell_2)\in R$ after the perimeter constraint is imposed (i.e., $\ell=(\ell_1,\ell_2,\frac{3-\ell_1\ell_2}{\ell_1+\ell_2})$), has the following behaviour:
\begin{itemize}
\item it is smooth in $R\setminus\{\Gamma_1,\Gamma_2,\Gamma_3\}$, where $\Gamma_3=\{(\ell_1,\ell_2):\ell_1=\ell_2,\ell_1\leq 1\}=R_1\cap R_2$;
\item it has a local maximum at $(\ell_1,\ell_2)=(1,1)$, and the value at this point is $2\pi^2$; here $\lambda_1(\Omega(\ell))$ is not smooth;
\item it has three saddle points at $(\ell_1,\ell_2)=(\bar\ell,\frac{3-\bar\ell^2}{2\bar\ell})$, $(\frac{3-\bar\ell^2}{2\bar\ell},\bar\ell)$ and $(\bar\ell,\bar\ell)$; here $\lambda_1(\Omega(\ell))$ is not smooth;
\item it has a positive infimum, which is attained asymptotically as $(\ell_1,\ell_2)\to (0,\sqrt{3})$, $(\sqrt{3},0)$ or $(\sqrt{3},\sqrt{3})$, which are boundary points of $R$; the value of the infimum is $\frac{2\pi^2}{3}$; in this limit, the corresponding cuboid  degenerates to a square of side $\sqrt{3}$;
\item $\sup_R\lambda_1(\Omega(\ell))=+\infty.$
\end{itemize}
See Figure~\ref{first_per} for a plot of $\lambda_1(\Omega(\ell))$ for $\ell=(\ell_1,\ell_2,\frac{3-\ell_1\ell_2}{\ell_1+\ell_2})$ in the region $R$.

\begin{figure}
    \centering
    \includegraphics[width=0.5\textwidth]{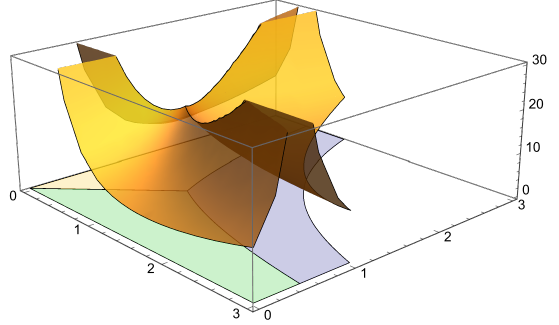}
    \caption{First eigenvalue of cuboids under perimeter constraint. Precisely, the plot represents $\lambda_1(\Omega(\ell))$ for $\Omega(\ell)=(0,\ell_1)\times(0,\ell_2)\times(0,\frac{3-\ell_1\ell_2}{\ell_1+\ell_2})$.}
    \label{first_per}
\end{figure}

\subsection{The second eigenvalue}
In the case of higher eigenvalues, in order to simplify the study, it is convenient to consider the region $0<\ell_1\leq\ell_2\leq\ell_3$. In this region we have
\begin{align*}
&\lambda_1(\Omega(\ell))
=\pi^2\left(\frac{1}{\ell_2^2}+\frac{1}{\ell_3^2}\right),\\
&\lambda_2(\Omega(\ell))
=\pi^2\min\left\{
\frac{1}{\ell_1^2}+\frac{1}{\ell_3^2},
\frac{1}{\ell_2^2}+\frac{4}{\ell_3^2}
\right\},\\
&\lambda_3(\Omega(\ell))
=\pi^2\min\Biggl\{
\frac{1}{\ell_1^2}+\frac{1}{\ell_2^2},
\max\left\{
\frac{1}{\ell_1^2}+\frac{1}{\ell_3^2},
\frac{1}{\ell_2^2}+\frac{4}{\ell_3^2}
\right\},
\frac{4}{\ell_2^2}+\frac{1}{\ell_3^2},
\frac{1}{\ell_2^2}+\frac{9}{\ell_3^2}
\Biggr\}.
\end{align*}

We consider here only the nature of the point $\ell_0=(1,1,1)$ for both volume and perimeter constraints.

Consider first the volume constraint $\ell_1\ell_2\ell_3=1$; then the second eigenvalue is given by the function
$$
F(\ell_1,\ell_2)=\pi^2\min\left\{\left(\frac{1}{\ell_1^2}+\ell_1^2\ell_2^2\right),\left(\frac{1}{\ell_2^2}+4\ell_1^2\ell_2^2\right)\right\}
$$
in the region $R=\{(\ell_1,\ell_2):0<\ell_1\leq\ell_2\leq\frac 1{\ell_1\ell_2}\}$. Let $\eps>0$  be sufficiently small such that
\[
F(\ell_1,\ell_2)=\pi^2\left(\frac 1{\ell_1^2}+\ell_1^2\ell_2^2\right) \quad \text{on } R\cap \overline{B_{\eps}},
\]
where $B_{\eps}=\{(\ell_1,\ell_2):|\ell_1-1|+|\ell_2-1|<\eps\}$. Restricting $F$ to the segment  $\ell_1=\ell_2$ in $R\cap B_{\eps}$, we see that the resulting one-variable function $\pi^2\left(\frac 1{\ell_1^2}+\ell_1^4\right)$ defined in $(1-\eps,1]$ is strictly increasing near $\ell_1=1$. Restricting $F$ to the curve $\ell_2=\ell_1^{-1/2}$, the resulting one-variable function $\pi^2\left(\frac{1}{\ell_1^2}+\ell_1\right)$ defined in $(1-c\eps,1]$ (for a suitable $c>0$) is strictly decreasing near $\ell_1=1$. Hence, $(1,1)$ is neither a local maximum nor a local minimum for $F$ restricted to $R \cap B_\varepsilon$. Recall that we have only considered the region $0<\ell_1\leq\ell_2\leq\frac 1{\ell_1\ell_2}$, but the same analysis can be carried out for the other five regions defined by $0<\ell_{i_1}\leq\ell_{i_2}\leq\ell_{i_3}$, where $\{i_1,i_2,i_3\}$ is a permutation of $\{1,2,3\}$. Nevertheless, just observing the behaviour in $R$ we can conclude that $\ell_0=(1,1,1)$ is not a local extremum for $\lambda_2(\Omega(\ell))$ under volume constraint. See Figure~\ref{second}.  

The same analysis can be carried out for the perimeter constraint. 

\begin{figure}
    \centering
\includegraphics[width=0.5\textwidth]{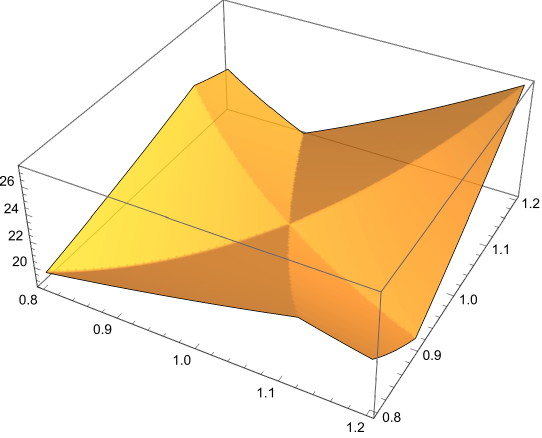}
    \caption{ Second eigenvalue of cuboids under volume constraint. Precisely, the plot represents $\lambda_2(\Omega(\ell))$ for $\Omega(\ell)=(0,\ell_1)\times(0,\ell_2)\times(0,\frac 1{\ell_1\ell_2})$ in a neighborhood of $(\ell_1,\ell_2)=(1,1)$.}
    \label{second}
\end{figure}

\medskip

\subsection{The third eigenvalue} Also for the third eigenvalue, in order to simplify the analysis, it is convenient to restrict to the case $0<\ell_1\le\ell_2\le \ell_3$. As for the second eigenvalue, we analyze only the nature of the point $\ell_0 =(1,1,1)$ under both volume and perimeter constraint. Consider first the volume constraint $\ell_1\ell_2\ell_3=1$; then the third eigenvalue is given by the function
\begin{multline*}
F(\ell_1,\ell_2) =
\pi^2\min\Bigg\{
\left(\frac{1}{\ell_1^2}+\frac{1}{\ell_2^2}\right),
\max\left\{\left(\frac{1}{\ell_1^2}+\ell_1^2\ell_2^2\right),\left(\frac{1}{\ell_2^2}+ 4\ell_1^2\ell_2^2\right)\right\},\\\left(\frac{4}{\ell_2^2}+\ell_1^2\ell_2^2\right),\left(\frac{1}{\ell_2^2}+\frac{9}{\ell_2^2}\right)
\Bigg\}
\end{multline*}
in the region $R=\{(\ell_1,\ell_2):0<\ell_1\leq\ell_2\leq\frac 1{\ell_1\ell_2}\}$, see Figure~\ref{region3}. Let $\eps>0$  be sufficiently small such that
\[
F(\ell_1,\ell_2)=\pi^2\left(\frac{1}{\ell_1^2}+\frac{1}{\ell_2^2}\right) \quad \text{on } R\cap \overline{B_\eps},
\]
where $B_{\eps}=\{(\ell_1,\ell_2):|\ell_1-1|+|\ell_2-1|<\eps\}$. 
If we restrict $F$ to horizontal lines in $R \cap B_\eps$ (i.e. we consider $F|_{\{\ell_2=c\}}$ for some $c>0$), we have
\[
F|_{\{\ell_2=c\}} (\ell_1,\ell_2) =\pi^2 \left( \frac{1}{\ell_1^2}+ \frac{1}{c^2}\right) =:F_c(\ell_1),
\]
which is a strictly decreasing function. Thus, its minimum in $R\cap \overline{B_\eps}$ is attained on $\partial R\cap \overline{B_\eps}$. This proves that the minimum of $F$ in $R\cap \overline{B_\eps}$ is reached on  $\partial R\cap \overline{B_\eps}$. Note that $\partial R\cap\{\ell_1>0\}$ is the disjoint union of two smooth curves intersecting in $(\ell_1,\ell_2)=(1,1)$:
$$
\Gamma_1'=\{(\ell_1,\ell_2):\ell_2=\ell_1^{-1/2}\,,\ell_1\leq 1\},\ \ \ \ \ \ \Gamma_2'=\{(\ell_1,\ell_2):\ell_1=\ell_2\,,\ell_1\leq 1\}.
$$
Consider now the restriction of $F$ to $\Gamma_1'$, namely the function
\[
G(\ell_1):=\pi^2\left(\frac1{\ell_1^2}+\ell_1\right), \quad \ell_1\in(0,1].
\]
We see that $G$ has a minimum at the boundary point $\ell_1=1$ and $\lim_{\ell_1\to 0}G(\ell_1)=+\infty$.
The same analysis holds when we restrict $F$ to $\Gamma_2'$: $F|_{\Gamma_2'}$ has a minimum at $\ell_2=1$ and diverges to $+\infty$ as $\ell_2\to 0$. Hence $F$ restricted to $R\cap\overline{B_\eps}$ attains its minimum at $(1,1)$. Recall that we have only considered the region $0<\ell_1\le\ell_2\le\ell_3$, but the same analysis can be carried out for the other five regions defined by $0<\ell_{i_1}\leq\ell_{i_2}\leq\ell_{i_3}$, where $\{i_1,i_2,i_3\}$ is a permutation of $\{1,2,3\}$. Then the function
\[
(\ell_1,\ell_2)\mapsto\lambda_3(\Omega(\ell))\,,\ \ \ \ell=\left(\ell_1,\ell_2,\frac 1{\ell_1\ell_2}\right)
\]
has a local minimum at $(\ell_1,\ell_2)=(1,1)$, where it is not smooth, see Figure~\ref{third}.

 The same analysis can be carried out for the perimeter constraint.

\begin{figure}
    \centering
    \includegraphics[width=0.5\textwidth]{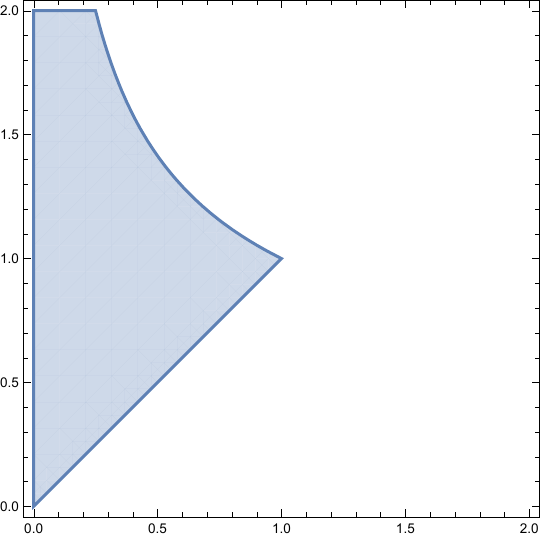}
    \caption{The region $R$}
    \label{region3}
\end{figure}

\begin{figure}
    \centering
\includegraphics[width=0.5\textwidth]{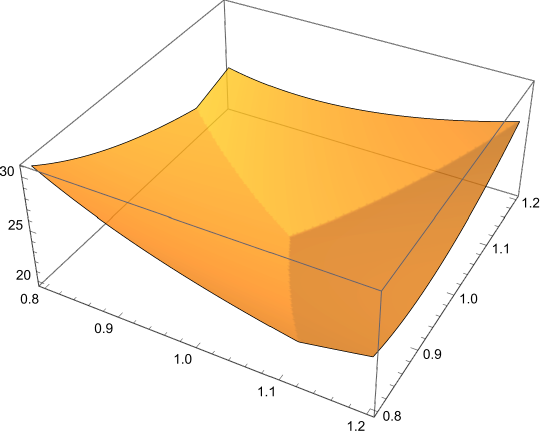}
    \caption{ Third eigenvalue of cuboids under volume constraint. Precisely, the plot represents $\lambda_3(\Omega(\ell))$ for $\Omega(\ell)=(0,\ell_1)\times(0,\ell_2)\times(0,\frac 1{\ell_1\ell_2})$ in a neighborhood of $(\ell_1,\ell_2)=(1,1)$.}
    \label{third}
\end{figure}

This concludes the proof. \qed

\begin{figure}
    \centering
\includegraphics[width=0.5\textwidth]{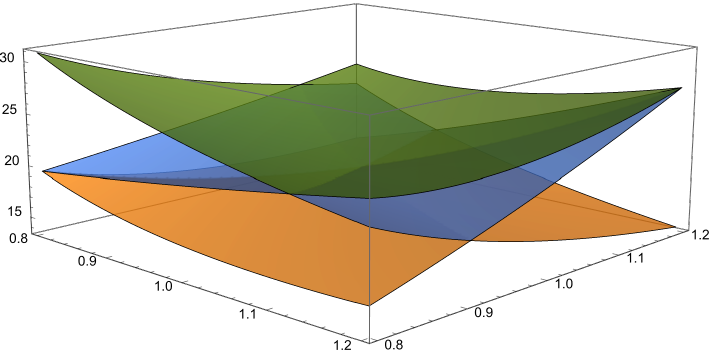}
    \caption{First three eigenvalues of cuboids under volume constraint. Precisely, the plot represents $\lambda_i(\Omega(\ell))$, $i=1,2,3$, for $\Omega(\ell)=(0,\ell_1)\times(0,\ell_2)\times(0,\frac 1{\ell_1\ell_2})$ in a neighborhood of $(\ell_1,\ell_2)=(1,1)$.}
    \label{alltogether}
\end{figure}

\section{Proof of Theorem ~\ref{prop0}}\label{app:B}

 In order to prove Theorem~\ref{prop0}, it is sufficient to exhibit a domain with the same volume/perimeter as the unit ball for which the third eigenvalue is smaller than that of the ball. Throughout this section we denote by $B$ the unit ball in $\mathbb R^3$.

{\bf Idea of the proof.} The proof can be sketched as follows: we consider a small domain $\omega$, of diameter $2\eps$, homeomorphic to a ball, with a small first Maxwell's eigenvalue. We attach this domain to $B$ through a thin cylinder isometric to $D_\eta\times(0,s)$, where $D_\eta\subset\mathbb R^2$ is a disk of radius $\eta>0$, and $s>0$. If the parameters are chosen in a suitable way, the first Maxell's eigenvalue of the resulting domain is smaller than the first Maxwell's eigenvalue of $\omega$. To obtain that the first $N$ eigenvalues are small, we attach $N$ such `handles' to $B$. Now we are ready to detail the steps of the proof.

\smallskip

{\bf Step 1. The small domain with small eigenvalue.} Given the parameters $0<h<\delta<\eps$ we define the following spherical shell with a hole
    \begin{equation} 
    \label{def Omega eps,delta,h}
        \omega_{\eps,\delta,h} := \left\{ (1-t) \rho :\, \rho \in \BBS^2_\eps \setminus \mathcal D_\delta,\;t \in (0,h)  \right\},
    \end{equation}
where $\BBS^2_\eps$ denotes the sphere of radius $\eps$ and $\mathcal D_\delta$ denotes the closure of a geodesic disc (spherical cap) of radius $\delta$ on $\BBS^2_\eps$. By construction, $\omega_{\eps,\delta,h}$ has diameter $2\eps$. We prove in Lemma~\ref{appendix B, lemma 1} that, for any $\eps>0$,
$$
\lim_{h,\delta\to 0}\lambda_1(\omega_{\eps,\delta,h})=0.
$$

\smallskip
{\bf Step 2. The cylinder.}
Let $0<\eta<s$ and define the cylinder
\begin{equation}\label{cilinder}
C_{\eta,s}:=D_{\eta}\times(0,s),
\end{equation}
where $D_{\eta}\subset\mathbb R^2$ is a disk of radius $\eta$.

\smallskip 

{\bf Step 3. The dumbbell domain}

Consider now the parameters $0<h<\delta<\eps<1$ and $0<\eta<\eps$. We can now introduce the domain on which is based the construction of $\Omega_{N,\eps}$  for $N=1$. It is a `dumbbell-like' domain
$\Omega= \Omega(\eps,\delta,h,\eta,s)$ defined by 
$$
\Omega(\eps,\delta,h,\eta,s):=B\cup C_{\eta,s}\cup\omega_{\eps,\delta,h}.
$$
See Figure~\ref{fig1} for a representation of $\Omega(\eps,\delta,h,\eta,s)$. 

\begin{figure}
    \centering
\includegraphics[width=0.7\textwidth]{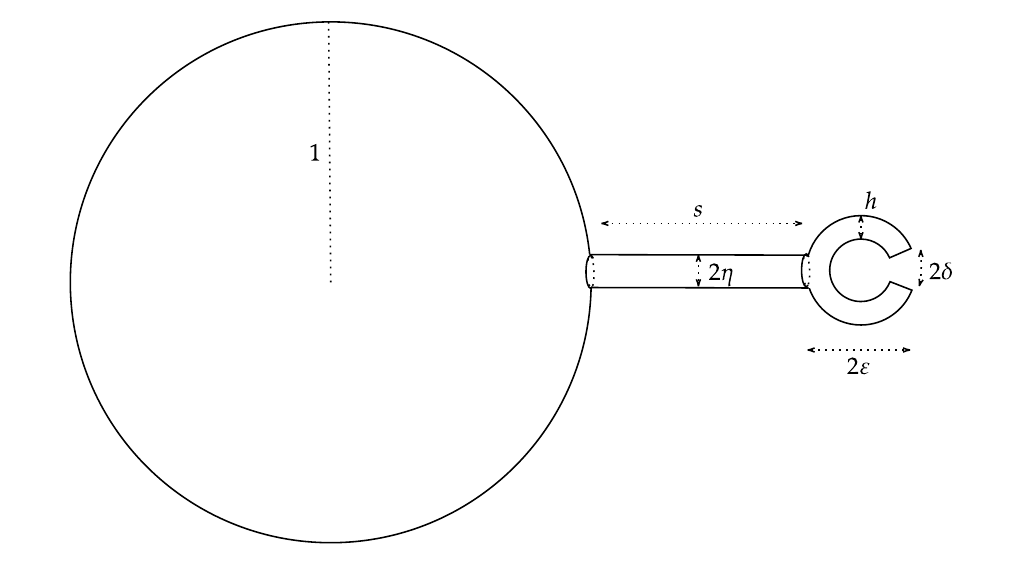}
    \caption{The dumbbell with one handle}
    \label{fig1}
\end{figure}

\smallskip

{\bf Step 4. The first eigenvalue of $\Omega$ is bounded in terms of that of $\omega_{\eps,\delta ,h}$.} Fix $\eps>0$ and  let $\Omega=\Omega(\eps,\delta,h,\eta,s)$ as above. We prove here that $\lambda_1(\Omega)$ can be bounded from above in terms of $\lambda_1(\omega_{\eps,\delta,h})$ for a suitable choice of the parameters $\eta,s$.

To do so, we write the dumbbell in an appropriate system of Cartesian coordinates $(x,y,z)$. In these coordinates, $B=\{(x,y,z)\in\mathbb R^3:x^2+y^2+z^2<1\}$ and
    \[
    C_{\eta,s} = \left\{(x,y,z)\in\R^3:\,x \in\left(\sqrt{1-\eta^2},\,s+ \sqrt{1-\eta^2}\right),\;y^2+z^2< \eta^2\right\}.
    \]
We center the sphere $\BBS^2_\eps$ defining $\omega_{\eps,\delta,h}$ at $(s+\sqrt{1-\eta^2}+\sqrt{\eps^2-\eta^2},0,0)$.  Recall that by construction $0<h<\delta<\eps<1$ and $0<\eta<\eps$, see Figure~\ref{fig1}.
Let also
$$
P=(s+\sqrt{1-\eta^2},0,0)
$$
be the point of intersection of $\omega_{\eps,\delta,h}$ with the $x$-axis. We denote by $X$ a point of coordinates $(x,y,z)$.

We define the function
    \[
    \phi(r) := 
    \begin{cases}
0\,, & {\rm if\ }r<1\,\\
r-1\,, & {\rm if\ }1\leq r<2\,,\\
1\,, & {\rm if\ }r\geq 2\,,
 \end{cases}
    \qquad \phi_\eta(r):= \phi\left( \frac{r}{\eta} \right),
    \]
and the corresponding cut-off
    \[
    \phi_{\eta}(X):= \phi_\eta(|X-P|).
    \]
    Let $u$ be an eigenfunction  associated with $\lambda_1(\omega_{\eps,\delta,h})$, normalized by $||u||_{L^2(\omega_{\eps,\delta,h})}=1$.
    Let
    \[
    u_{\eta}:=
    \begin{cases}
u\phi_\eta & {\rm in\ }\omega_{\eps,\delta,h}\,,\\
0 & {\rm in\ }\Omega\setminus\overline{\omega_{\eps,\delta,h}}.
\end{cases}
    \]
  The function $u_{\eta}$ satisfies the boundary condition $u_{\eta}\times \nu = 0$ on $\partial \Omega$ by construction. Now we would like to use $u_{\eta}$ as test field in the Rayleigh quotient defining $\lambda_1(\Omega)$. However, this is not directly possible since the function $u_{\eta}$ is not necessarily divergence free. To bypass this, we introduce the following eigenvalue problem    
    \begin{equation}
        \label{hodge Pb}
   \begin{cases}
{\rm curl\,curl\,}u-\nabla{\rm div\,}u=\lambda^Hu\,, & {\rm in\ }\Omega\,,\\
u\times\nu =0\,, & {\rm on\ }\partial\Omega\,,\\
{\rm div\,}u=0\,, & {\rm on\ }\partial\Omega\,.
   \end{cases}
    \end{equation}
Problem \eqref{hodge Pb} is the formulation in terms of vector fields in $\mathbb R^3$ of the eigenvalue problem for the Hodge Laplacian acting on $1$-forms with relative boundary conditions. See e.g., \cite{guerini_savo} for an introduction to the Hodge-Laplacian spectrum on manifolds with boundary (see also \cite{ferraresso_provenzano}). In particular, the spectrum of \eqref{hodge Pb} is given by the union of the spectrum of \eqref{curlcurl} on $\Omega$, that is, the set of Maxwell's eigenvalues, and the spectrum of the Dirichlet Laplacian on $\Omega$. We denote by
$$
0\leq\lambda_1^H(\Omega)\leq\lambda_2^H(\Omega)\leq\cdots\leq\lambda_j^H(\Omega)\leq\nearrow+\infty
$$
the eigenvalues of \eqref{hodge Pb}. Since $\Omega$ is homeomorphic to a ball, we have $\lambda_1^H(\Omega)>0$. The eigenvalues of \eqref{hodge Pb} are characterized variationally by    
    \begin{equation*}
        \lambda_j^H(\Omega)= \inf_{\stackrel{V \subset H_0({\rm curl},\Omega)\cap H({\rm div},\Omega)}{{\rm dim}V=j}} \sup_{v\in V\setminus\{0\}} \frac{\int_\Omega |{\rm curl }\, v|^2 + \int_\Omega|{\rm div}\,v|^2}{\int_\Omega |v|^2}.
    \end{equation*}
  
  We bound from above $\lambda_1^H(\Omega)$ using $u_{\eta}$ as test function, which is admissible:
\begin{equation}\label{appendix B, dim lemma 2, eq1}
\lambda_1^H(\Omega)\leq \frac{\int_{\Omega}|{\rm curl}\,u_\eta|^2+\int_{\Omega}|{\rm div}\,u_\eta|^2}{\int_\Omega|u_\eta|^2}.
\end{equation}
We have
   \begin{equation}\label{est000}
    \begin{split}
        &\int_\Omega |{\rm curl }\, u_{\eta}|^2\\&=\int_{\omega_{\eps,\delta,h}} |{\rm curl }\, u|^2 - \int_{\omega_{\eps,\delta,h}\cap B_{2\eta}} |{\rm curl }\, u|^2 + \int_{\omega_{\eps,\delta,h}\cap B_{2\eta}} |{\rm curl }\, (u\, \phi_{\eta})|^2\\&=
\lambda_1(\omega_{\eps,\delta,h}) - \int_{\omega_{\eps,\delta,h}\cap B_{2\eta}} |{\rm curl }\, u|^2 + \int_{\omega_{\eps,\delta,h}\cap B_{2\eta}} |{\rm curl }\, (u\, \phi_{\eta})|^2\, ,  \end{split}
  \end{equation}
    where $B_{2\eta}$ denotes the ball of radius $2\eta$ centered at $P$. An explicit estimate gives
    $$
    |{\rm curl}\,(u\phi_\eta)|^2-|{\rm curl}\,u|^2\leq 2|u|^2|\nabla\phi_\eta|^2+|\phi_\eta|^2|{\rm curl}\,u|^2\,.
    $$
   Then
\begin{multline} \label{appendix B, lemma2, dim, stima 1}
\int_{\omega_{\eps,\delta,h}\cap B_{2\eta}}|{\rm curl}\,(u\phi_\eta)|^2-|{\rm curl}\, u|^2\\\leq\frac{32}{3}\pi \eta\left(2\|u\|_{L^{\infty}(\omega_{\eps,\delta,h})}^2+\|{\rm curl}\,u\|_{L^{\infty}(\omega_{\eps,\delta,h})}^2\eta^2\right).
\end{multline}
We now estimate the divergence term. Since ${\rm div} u=0$ on $\omega_{\eps,\delta,h}$,
    \begin{equation}
    \label{appendix B, lemma2, dim, stima 2}
    \begin{split}
        \int_\Omega |{\rm div }\, u_{\eta}|^2& = \int_{\omega_{\eps,\delta,h}\cap B_{2\eta}} |{\rm div }\, (u\phi_{\eta})|^2\\& \le ||u||^2_{L^\infty(\omega_{\eps,\delta,h})} ||\nabla \phi_{\eta}||^2_{L^\infty(B_{2\eta})}|B_{2\eta}| \leq\frac{32}{3}\pi \,\eta ||u||^2_{L^\infty(\omega_{\eps,\delta,h})}.
    \end{split}
    \end{equation}
    Now we study the denominator in \eqref{appendix B, dim lemma 2, eq1}
    \[
    \begin{split}
        \int_\Omega | u_{\eta}|^2 &= \int_{\omega_{\eps,\delta,h}} | u|^2- \int_{\omega_{\eps,\delta,h}\cap B_{2\eta}} | u|^2 + \int_{\omega_{\eps,\delta,h}\cap B_{2\eta}} |u\, \phi_{\eta}|^2\\&
    =1 - \int_{\omega_{\eps,\delta,h}\cap B_{2\eta}} | u|^2 + \int_{\omega_{\eps,\delta,h}\cap B_{2\eta}} |u\, \phi_{\eta}|^2.
    \end{split}
    \]
  We estimate
    \begin{multline}
    \label{appendix B, lemma2, dim, stima 3}
        \int_{\omega_{\eps,\delta,h}\cap B_{2\eta}} |u\, \phi_{\eta}|^2 - | u|^2 \\
        \ge -||u||^2_{L^{\infty}(\omega_{\eps,\delta,h})}  || 1-\phi_{\eta}^2||^2_{L^\infty(B_{2\eta})}\, |B_{2,\eta}| \ge -\frac{32}{3}\pi \eta^3 ||u||^2_{L^{\infty}(\omega_{\eps,\delta,h})}.
    \end{multline}
    Using \eqref{est000},\eqref{appendix B, lemma2, dim, stima 1}, \eqref{appendix B, lemma2, dim, stima 2}, \eqref{appendix B, lemma2, dim, stima 3} in \eqref{appendix B, dim lemma 2, eq1} we obtain
    \begin{equation}
\lambda_1^H(\Omega) \leq\frac{\lambda_1(\omega_{\eps,\delta,h})+\frac{32}{3}\pi\eta\left(3\|u\|_{L^{\infty}(\omega_{\eps,\delta,h})}^2+\|{\rm curl}\,u\|_{L^{\infty}(\omega_{\eps,\delta,h})}^2\eta^2\right)}{1-\frac{32}{3}\pi\eta^3\|u\|_{L^{\infty}(\omega_{\eps,\delta,h})}^2},
    \end{equation}
     provided that $\eta$ is sufficiently small.
Now, since $\|u\|_{L^2(\omega_{\eps,\delta,h})}=1$, the norms $\|u\|_{L^{\infty}(\omega_{\eps,\delta,h})}$ and $\|{\rm curl}\,u\|_{L^{\infty}(\omega_{\eps,\delta,h})}$ depend only on $\eps,h,\delta$, and once they have been fixed, we can choose $\eta=\eta(\eps,h,\delta)$ small enough such that
$$
\lambda_1^H(\Omega)\leq (1+\eps)\lambda_1(\omega_{\eps,\delta,h}).
$$
Lemma~\ref{appendix B, lemma 1} ensures that $\delta,h$ can be chosen sufficiently small such that 
$$
\lambda_1(\omega_{\eps,\delta,h})\leq\frac{\eps}{2(1+\eps)}.
$$
Hence
$$
\lambda_1^H(\Omega)\leq\frac{\eps}{2}.
$$
Now, $\lambda_1^H(\Omega)=\min\{\lambda_1(\Omega),\lambda_1^D(\Omega)\}$, where $\lambda_j^D(\Omega)$ denote the Dirichlet eigenvalues of the Laplacian on $\Omega$. A result of Rohleder \cite{rohleder} states that $\lambda_3(\Omega)\leq\lambda_1^D(\Omega)$, hence $\lambda_j^H(\Omega)=\lambda_j(\Omega)$ for $j=1,2,3$. We have proved that
$$
\lambda_1(\Omega)\leq\frac{\eps}{2}.
$$

\smallskip
 {\bf Step 5. Volume/perimeter constraint and Hausdorff distance}

In the previous step, for any $\eps>0$ (small) we have produced an example of a domain $\Omega=\Omega(\eps,\delta,h,\eta,s)$ such that
\begin{equation}\label{ineq_part}
\lambda_1(\Omega)\leq\frac{\eps}{2}.
\end{equation}
 The parameters $\delta,h,\eta$ depend on $\eps$. Moreover, we can always take $\eta\leq\eps^2$. Up to now, the parameter $s$ had no true influence in the validity of \eqref{ineq_part}, and can be any fixed positive number. Take $s=\eps$. Now we see that
 $$
|\Omega|=\frac{4}{3}\pi+O(\eps^3)\,,\ \ \ |\partial\Omega|=4    \pi+O(\eps^2).
 $$
This implies that $\frac{|\Omega|}{|B|}=1+O(\eps^3)$, and if $\eps$ is small enough, we can assume that
 $$
\frac{|\Omega|}{|B|}\leq 2, \quad  \frac{|\partial\Omega|}{|\partial B|}\leq 2.
$$
When considering the volume constraint, set
$$
\Omega_{1,\eps}=\frac{|B|^{1/3}}{|\Omega|,^{1/3}}\Omega,
$$
and then $|\Omega_{1,\eps}|=|B|$ and
$$
\lambda_1(\Omega_{1,\eps})=\frac{|\Omega|^{2/3}}{|B|^{2/3}}\lambda_1(\Omega)<\eps.
$$
When considering the perimeter constraint, set
$$
\Omega_{1,\eps}=\frac{|\partial B|^{1/2}}{|\partial\Omega|^{1/2}}\Omega,
$$
and then $|\partial\Omega_{1,\eps}|=|\partial B|$ and
$$
\lambda_1(\Omega_{1,\eps})=\frac{|\partial\Omega|}{|\partial B|}\lambda_1(\Omega)\leq \eps.
$$
From the choice $s=\eps$, and from the fact that $\frac{|\Omega|}{| B|}=1+O(\eps^3)$, $\frac{|\partial\Omega|}{|\partial B|}=1+O(\eps^2)$, we have that $d_H(B,\Omega_{1,\eps})\leq 3\eps$.

\smallskip

{\bf Step 6. Construction of a domain with $N$ small eigenvalues.}
To prove the result with any $N\geq 1$, it is sufficient to attach $N$ congruent copies of the handle $C_{\eta,s}\cup\omega_{\eps,\delta,h}$ as in the previous step at $N$ distinct points of $\partial B$. Precisely, define
$$
\Omega=B\cup(C_{\eta,s}^1\cup\omega_{\eps,\delta,h}^1)\cup\cdots\cup (C_{\eta,s}^N\cup\omega_{\eps,\delta,h}^N),
$$
where the parameters are taken sufficiently small such that $(C_{\eta,s}^i\cup\omega_{\eps,\delta,h}^i)\cap(C_{\eta,s}^j\cup\omega_{\eps,\delta,h}^j)=\emptyset$ if $i\ne j$.

 Associated to each handle $(C_{\eta,s}^i\cup\omega_{\eps,\delta,h}^i)$, define a test function $u_{\eta,i}$, $i=1,...,N$ as in Step 4. By construction, these functions $u_{\eta,i}$ are disjointly supported, hence 
$$
V=\left\{\sum_{i=1}^N{a_iu_{i,\eta}}:a_i\in\mathbb R\right\}
$$
is a $N$-dimensional subspace of $H_0({\rm curl}\,,\Omega)\cap H({\rm div}\,,\Omega)$. Thus
\begin{align*}
\lambda_N^H(\Omega)&\leq\sup_{v\in V\setminus\{0\}}\frac{\int_{\Omega}|{\rm curl}\,v|^2+|{\rm div}\,v|^2}{\int_{\Omega}|v|^2}\\&
=\sup_{(a_1,...,a_N)\in\mathbb R^N\setminus\{0\}}\frac{\sum_{i=1}^Na_i^2\left(\int_{\Omega} |{\rm curl}\,u_{i,\eta}|^2+|{\rm div}\,u_{i,\eta}|^2\right)}{\sum_{i=1}^Na_i^2\int_{\Omega} |\,u_{i,\eta}|^2}\\&=\frac{\int_{\Omega} |{\rm curl}\,u_{i,\eta}|^2+|{\rm div}\,u_{i,\eta}|^2}{\int_{\Omega} |\,u_{i,\eta}|^2}
\end{align*}
since the Rayleigh quotients of each $u_{i,\eta}$ are the same  because the handles are congruent.
Choosing the parameters as in Steps 1-5 we find that
$$
\lambda_N^H(\Omega)\leq\frac{\eps}{2}.
$$
Here, for $N\geq 3$ we cannot deduce immediately from \cite{rohleder} that $\lambda_N^H(\Omega)=\lambda_N(\Omega)$. However, since it is well-known that $\lambda_1^D(\Omega)\geq \frac{C}{|\Omega|^{2/3}}>0$  for some universal $C>0$, we deduce that if $\eps$ is small enough, then $\lambda_N^H(\Omega)\leq\frac{\eps}{2}$ implies $\lambda_N^H(\Omega)=\lambda_N(\Omega)$. The discussion on the volume/perimeter renormalization and on the Hausdorff distance is now exactly the same as in Step 5.

\smallskip

Note that from the proof we have $d_H(B,\Omega)<3\eps$. However, to get the statement of Theorem~\ref{prop0} it is enough to set $\eps'=3\eps$. This concludes the proof.
\qed

\begin{lem}\label{appendix B, lemma 1}
    Let $0<h<\delta<\eps$ and define $\omega_{\eps,\delta,h}$ as in \eqref{def Omega eps,delta,h}. Then
        \begin{equation*}
            \lim_{\delta\to 0}\lim_{h\to 0}\lambda_1(\omega_{\eps,\delta,h})=0.
        \end{equation*}
\end{lem}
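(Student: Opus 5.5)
The idea is to produce, for fixed $\eps$, an explicit test field on $\omega_{\eps,\delta,h}$ with small Rayleigh quotient, and to compare with the Hodge Laplacian problem \eqref{hodge Pb}, as in Step 4. The shell $\omega_{\eps,\delta,h}$ is a thin neighbourhood (thickness $h\eps$) of the punctured sphere $S_\delta:=\BBS^2_\eps\setminus\mathcal D_\delta$. By the thin-tube convergence result of \cite{ferraresso_provenzano}, the Maxwell eigenvalues of $\omega_{\eps,\delta,h}$ converge as $h\to0$ to the eigenvalues of the Laplacian on $S_\delta$ with Dirichlet condition on $\partial\mathcal D_\delta$. So the inner limit should be
$$
\lim_{h\to0}\lambda_1(\omega_{\eps,\delta,h})=\mu_1(\delta),
$$
where $\mu_1(\delta)$ is the first Dirichlet eigenvalue of the Laplace–Beltrami operator on $S_\delta$. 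It then remains to show $\mu_1(\delta)\to0$ as $\delta\to0$. This is the classical fact that a point has zero capacity in two dimensions.

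Should the thin-domain convergence be unavailable in exactly this form, I would instead prove only the upper bound $\limsup_{h\to0}\lambda_1(\omega_{\eps,\delta,h})\le\mu_1(\delta)$, which suffices. Let $f$ be a Dirichlet eigenfunction on $S_\delta$. Write points of the shell as $(1-t)\rho$ and set
$$
u(x)=f(\rho)\,\nu(\rho),
$$
where $\nu$ is the radial unit normal. Then $u\times\nu=0$ on the inner and outer spherical faces. On the lateral face, over $\partial\mathcal D_\delta$, the normal is tangential, so $u\times\nu$ is not automatically zero, but there $f=0$, hence $u=0$.

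Next, ${\rm curl}\,u=\nabla f\times\nu$ up to $O(h)$ corrections, so $|{\rm curl}\,u|^2\approx|\nabla_{S}f|^2$. Also ${\rm div}\,u=f\cdot 2/r$, which stays bounded rather than small. This is why I would use the Hodge quotient: for $v=f\nu$ the Hodge quotient has numerator $\int|{\rm curl}\,v|^2+\int|{\rm div}\,v|^2$, giving $\lambda_1^H\lesssim\mu_1(\delta)+O(\eps^{-2})$. That is not small, so the divergence must be removed.

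A cleaner route is to project $u$ onto divergence-free fields. Take the Helmholtz decomposition $u=w+\nabla p$ with $p\in H^1_0$ and $\Delta p={\rm div}\,u$, so that $w$ is divergence free. Since ${\rm curl}\,w={\rm curl}\,u$ and the tangential trace of $\nabla p$ vanishes, $w$ is admissible for \eqref{curlcurl}. Moreover $\|\nabla p\|_{L^2}\le\|{\rm div}\,u\|_{H^{-1}}$. In a shell of thickness $h\eps$ the Poincaré constant for $H^1_0$ is $O(h\eps)$, so $\|\nabla p\|\lesssim h\eps\,\|{\rm div}\,u\|_{L^2}\lesssim h\|u\|$. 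Hence $\|w\|^2=\|u\|^2(1-O(h))$, and
$$
\lambda_1(\omega_{\eps,\delta,h})\le\frac{\int|{\rm curl}\,u|^2}{\int|w|^2}\le\mu_1(\delta)(1+O(h))+O(h),
$$
which gives the inner limit bound.

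For the outer limit I would bound $\mu_1(\delta)$ by a log cutoff. In geodesic polar coordinates $r$ around the centre of $\mathcal D_\delta$, take $g=0$ for $r<\delta$, $g=\log(r/\delta)/\log(\sqrt\delta/\delta)$ for $\delta\le r\le\sqrt\delta$, and $g=1$ otherwise. Then $\int|\nabla g|^2\lesssim 2\pi/\log(\delta^{-1/2})\to0$, while $\int g^2\to4\pi\eps^2$. Hence $\mu_1(\delta)\to0$.

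The main obstacle is the Helmholtz-projection step. It requires uniform control of ${\rm div}\,u$ relative to the thickness, i.e. the thin-shell Poincaré inequality $\|p\|\lesssim h\eps\|\nabla p\|$ for $p\in H^1_0$. This holds because every point of the shell is within distance $h\eps$ of the spherical faces, where $p$ vanishes, so a 1D Poincaré inequality in the radial direction applies. I expect this to work, with some bookkeeping near the lateral face.
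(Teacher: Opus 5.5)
Your main route is the paper's proof. You use the thin-shell convergence of \cite{ferraresso_provenzano} to identify the inner limit with the first Dirichlet eigenvalue of $\BBS^2_\eps\setminus\mathcal D_\delta$, and then show that this eigenvalue tends to $0$ as $\delta\to0$; your explicit logarithmic cutoff replaces the paper's citation of \cite{COURTOIS}.

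Your fallback is also sound. It uses the radial field $f\nu$, projected onto divergence-free fields via $p\in H^1_0$ and the thin-shell Poincaré inequality along radial segments, each of which joins the two spherical faces, so the lateral face causes no difficulty. However, it only gives $\limsup_{h\to0}\lambda_1(\omega_{\eps,\delta,h})\le\mu_1(\delta)$, not the existence of the inner limit, although that upper bound is all Step 4 actually needs.
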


\begin{proof}
Let $t>0$. It is known (see e.g., \cite{COURTOIS}) that there exists $\delta=\delta(t)>0$ such that
    \[
\lambda_1^\Delta(\BBS^2_\eps\setminus \mathcal D_\delta)<\frac{t}{2},
    \]
    where $\lambda_1^\Delta\left(\BBS^2_\eps\setminus\mathcal D_\delta\right)$ is the first Dirichlet eigenvalue of the Laplace-Beltrami operator on $\BBS^2_\eps\setminus\mathcal D_\delta$.

Now, from \cite[ Theorem 1.1]{ferraresso_provenzano} we have that there exists $h=h(t,\delta)>0$ such that
    \[
    \left| \lambda_1\left(\omega_{\eps,\delta,h}\right) -\lambda_1^\Delta\left(\BBS^2_\eps\setminus \mathcal D_\delta\right) \right|<\frac{t}{2}.
    \]
    This concludes the proof.
\end{proof}
As a consequence of the previous results, the ball is not a local minimiser for $\lambda_1(\Omega)$ under either a volume or a perimeter constraint. Moreover, it is not a local minimiser for any of the three elementary symmetric functions of the first three Maxwell eigenvalues under either constraint.

Note that in the previous construction we can replace $B$ with any other smooth domain $\Omega$ homeomorphic to a ball. Hence we deduce the following

\begin{cor}
There are no local Lipschitz minimisers homeomorphic to a ball for $\lambda_1(\Omega)$ under either a volume or a perimeter constraint. The same holds true for elementary symmetric functions of the first three eigenvalues.
\end{cor}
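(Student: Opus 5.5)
The statement is the Corollary: there are no local Lipschitz minimisers homeomorphic to a ball for $\lambda_1$, or for the elementary symmetric functions of $\lambda_1,\lambda_2,\lambda_3$, under volume or perimeter constraint. I would argue by contradiction, reusing the handle construction from the proof of Theorem~\ref{prop0} with $B$ replaced by a general candidate domain.

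\textbf{Setup.} Suppose $\Omega_0$ is a Lipschitz domain homeomorphic to a ball and a local minimiser in the Hausdorff sense. Since $\partial\Omega_0$ is connected, $\lambda_1(\Omega_0)>0$, and hence $F_i(\lambda_1,\lambda_2,\lambda_3)(\Omega_0)>0$ for $i=1,2,3$. Fix $N=3$ and a small $\eps>0$.

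\textbf{Attaching the handles.} Choose three distinct points on $\partial\Omega_0$ where the boundary is locally a Lipschitz graph. At each one, attach a thin cylinder $C_{\eta,s}$ and a shell $\omega_{\eps,\delta,h}$ lying outside $\overline{\Omega_0}$. Everything is placed inside a small ball around the point, so the resulting domain stays homeomorphic to a ball.

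The test fields $u_{\eta,i}$ are supported inside the shells $\omega^i_{\eps,\delta,h}$, away from $\Omega_0$. Step~4 therefore never uses the geometry of $B$, only the cut-off near the junction point $P$. Lemma~\ref{appendix B, lemma 1} is also local to the shell. This gives $\lambda_3^H(\Omega)\le\eps/2$.

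\textbf{Passing to Maxwell eigenvalues.} The Faber--Krahn lower bound $\lambda_1^D(\Omega)\ge C|\Omega|^{-2/3}$ shows that the small Hodge eigenvalues are Maxwell eigenvalues. Hence $\lambda_j(\Omega)\le\eps/2$ for $j\le 3$.

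\textbf{Renormalisation.} Rescale exactly as in Step~5. The volume and the perimeter change only by $O(\eps^2)$, so the scaling factor tends to $1$. The Hausdorff distance to $\Omega_0$ is $O(\eps)$, as before.

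\textbf{Comparison.} Monotonicity of the $F_i$ in each variable, for nonnegative arguments, gives
\[
F_i\bigl(\lambda_1,\lambda_2,\lambda_3\bigr)(\Omega)\le F_i(\eps,\eps,\eps)\to 0 .
\]
Since the corresponding quantity for $\Omega_0$ is a fixed positive number, this contradicts local minimality. For $\lambda_1$ alone, a single handle suffices.

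\textbf{Main obstacle.} The delicate point is attaching handles to a merely Lipschitz $\Omega_0$ while keeping three properties:
\begin{enumerate}[a)]
\item the new domain is Lipschitz and homeomorphic to a ball;
\item the shell and the cylinder do not intersect $\Omega_0$ beyond the junction;
\item the perimeter increase is $O(\eps^2)$.
\end{enumerate}
I would first choose a boundary point where $\partial\Omega_0$ is a Lipschitz graph, then place the cylinder along a direction inside the exterior cone given by the Lipschitz character. With $\eta\ll s=\eps$, the cylinder reaches outside a neighbourhood of $\partial\Omega_0$, and the shell can sit beyond it. The perimeter increase then comes only from the cylinder's lateral surface and from the shell, both of which are $O(\eps^2)$.
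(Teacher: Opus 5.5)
Your proposal is correct and follows essentially the paper's route. The paper deduces the corollary by noting that, in the $N$-handle construction of Theorem~\ref{prop0} (Faber--Krahn to identify the small Hodge eigenvalues with Maxwell eigenvalues, then rescaling to the constraint), the ball can be replaced by any smooth domain homeomorphic to a ball; taking $N=3$ and using monotonicity of the $F_i$ is then exactly your argument.

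Your treatment of attaching handles to a merely Lipschitz $\Omega_0$ is more careful than the paper's, which only treats smooth domains. One detail to fix: the shell radius must be decoupled from $s$, since with radius $\eps$ and $s=\eps$ the shell need not fit inside a narrow exterior cone. Take the radius at most $s/\sqrt{1+L^2}$, where $L$ is the local Lipschitz constant; Lemma~\ref{appendix B, lemma 1} allows this, as it holds for any fixed radius. Alternatively, attach the handles at points where $\partial\Omega_0$ is differentiable.
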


\subsection{Remarks on the local maximality of $\lambda_1$} We end this appendix discussing the case of the maximisation of the first eigenvalue. Let $\Omega_1,\Omega_2$ be two fixed bounded domains, and let $C_{\eta,s}$ be a cylinder as above. Define the dumbbbell domain $\Omega_{\eta,s}=\Omega_1\cup C_{\eta,s}\cup\Omega_2$ and assume that it is Lipschitz for all $\eta>0$ small, see Figure~\ref{fig2}.

\medskip

{\bf Assumption.} We assume, without providing here a proof, that
\begin{equation}\label{conv_1}
\lim_{\eta\to 0}\lambda_j(\Omega_{\eta,s})=\lambda_j(\Omega_1\cup\Omega_2),
\end{equation}
where the union $\Omega_1\cup\Omega_2$ is disjoint. 

\medskip

The Maxwell's spectrum of $\Omega_1\cup\Omega_2$ is just the union of the spectra of $\Omega_1$ and $\Omega_2$. Assume that \eqref{conv_1} is true. Fix $\eps>0$, take $\Omega_1=B$, $\Omega_2=B_\eps$ and $s=\eps$, where $B_\eps$ is a ball of radius $\eps<1$. From \eqref{conv_1} we deduce that there exists $0<\eta=\eta(\eps)<\eps$ such that
\begin{equation}
\label{appendix b, conroes 2, conclusione, eq1}
    \lambda_1(B) \leq \lambda_1(\Omega_{\eta(\eps),\eps}) + C\eps^4,
\end{equation}
where $C>0$ is some positive constant. We contract $\Omega_{\eta(\eps),\eps}$ to find a domain with the same volume or perimeter as $B$. Precisely, in the case of volume constraint set
$$
\Omega_\eps:=\frac{|B|^{1/3}}{|\Omega_{\eta(\eps),\eps}|^{1/3}}\Omega_{\eta(\eps),\eps}.
$$
Then \eqref{appendix b, conroes 2, conclusione, eq1} becomes
\begin{equation}
\label{appendix b, conroes 2, conclusione, eq2}
\frac{|\Omega_{\eta(\eps),\eps}|^{2/3}}{|B|^{2/3}}\lambda_1(B)\leq\lambda_1(\Omega_\eps)+C\eps^4\frac{|\Omega_{\eta(\eps),\eps}|^{2/3}}{|B|^{2/3}}.
\end{equation}
Now, since $s=\eps$ and $\eta<\eps$, we have
\begin{equation}\label{comp}
1+C_1\eps^3\leq\frac{|\Omega_{\eta(\eps),\eps}|^{2/3}}{|B|^{2/3}}\leq 1+C_2\eps^3,
\end{equation}
where $C_1,C_2>0$ are constant independent of $\eps$.
Then \eqref{appendix b, conroes 2, conclusione, eq2} can be re-written as
$$
\lambda_1(\Omega_\eps)-\lambda_1(B)\geq\eps^3(C_1\lambda_1(B)-C\eps(1+C_2\eps^3)).
$$
If $\eps$ is sufficiently small, the right hand side is strictly positive. Moreover, 
\[
d_H(\Omega_\eps,B)\leq3\eps.
\]
The case of perimeter constraint is similar, it amounts to observing that the ratio $\frac{|\partial\Omega_{\eta(\eps),\eps}|}{|\partial B|}$ satisfies the analogue of \eqref{comp} with different constants $C_1,C_2>0$.

\smallskip

Provided that \eqref{conv_1} holds true, this shows that the ball is not a local maximiser for $\lambda_1(\Omega)$ under either volume or perimeter constraint, and, more in general, that there are no local maximisers homeomorphic to a ball for $\lambda_1$ under both volume and perimeter constraints  (just take any $\Omega_1$ homeomorphic to a ball and $\eps$ small enough).

\smallskip Convergence of eigenvalues and eigenfunctions on dumbbell-like closed manifolds for the eigenvalues of the Hodge Laplacian on $p$-forms has been investigated in detail in \cite{colboisanne}. We briefly describe the result. Suppose that $M$ is a closed, $n$-dimensional Riemannian manifold composed of two fixed parts, that is, two closed manifolds $M_1$, $M_2$, from which small geodesic balls of radius $\eta$ are removed. Then, $M_1,M_2$ (with a ball of radius $\eta$ removed from each) are joined by the channel $S_{\eta}=\mathbb S^{n-1}\times(0,s)$. In \cite{colboisanne} it is proved that, if $n=3$ and $p=1$, the limit spectrum is given by the union of the spectra of $M_1,M_2$ and of the Dirichlet and Neumann spectra of the interval $(0,s)$. In a forthcoming paper we aim at addressing the same problem for dumbbell domains and more in general, dumbbell-like manifolds with boundary and relative or absolute conditions and establish rigorously \eqref{conv_1}. In particular, for $n=3$ $p=1$ and relative conditions (the Maxwell's case) there is no contribution from the thin channel (Maxwell's eigenvalues on a thin cylinder go to $+\infty$), while we have the Dirichlet and Neumann spectrum at the limit in the case of absolute boundary conditions. We refer e.g., to \cite{guerini_savo} for an introduction on the Laplacian acting on $p$-forms on manifolds with boundary.

\begin{figure}
    \centering
\includegraphics[width=0.7\textwidth]{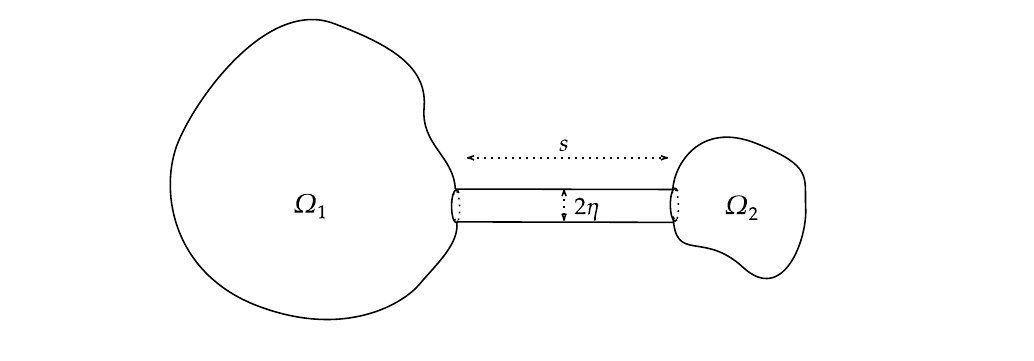}
    \caption{The dumbbell with one handle}
    \label{fig2}
\end{figure}

\section*{Acknowledgments}
The authors are thankful to Professor Bruno Colbois for fruitful discussions on eigenvalues on differential forms and for pointing out reference \cite{guerini}. The second author is grateful
to the Dipartimento di Matematica and the Dipartimento di Tecnica e Gestione dei Sistemi Industriali of the University of Padova for
the kind hospitality during the preparation of the manuscript. P.D. Lamberti and R. Sempio ackowledge the support of the SID project ``Spectral and Geometric Analysis of Electromagnetic Operators" funded by the Dipartimento di Tecnica e Gestione dei Sistemi Industriali, Università degli Studi di Padova.

\bibliography{bibliography.bib}
\bibliographystyle{abbrv}
\end{document}